\documentclass[a4paper,12pt]{article}

\usepackage{amsmath,amsfonts,amssymb}
\usepackage{amsthm}
\usepackage{color,multicol}

\usepackage{hyperref}

\newtheorem{lemma}{Lemma}[section]
\newtheorem{theorem}[lemma]{Theorem}

\newtheorem{definition}[lemma]{Definition}

\newtheorem{remark}[lemma]{Remark}

\newcommand{\Dom}{\mathrm{Dom}}
\newcommand{\Spec}{\mathrm{Spec}}

\parindent=0pt

\parskip=5pt

\newcommand{\Xfont}{\mathcal{X}}
\newcommand{\Yfont}{\mathcal{Y}}

\begin{document}

\title{Level sets of the resolvent norm of a linear
operator revisited\footnote{MSC2010 subject classification 47A10, 47D06}}

\author{E. B. Davies\footnote{E-mail: \ E.Brian.Davies@kcl.ac.uk} \  and
 Eugene Shargorodsky\footnote{E-mail: \ eugene.shargorodsky@kcl.ac.uk} \\
Department of Mathematics, King's College London,\\
Strand, London WC2R 2LS, UK}

\date{}

\maketitle

\begin{abstract}
It is proved that the resolvent norm of an operator  with a compact
resolvent on a Banach space $X$ cannot be constant on an open set if
the underlying space or its dual is complex strictly convex. It is
also shown that this is not the case for an arbitrary Banach space: there
exists a separable, reflexive space $X$ and an unbounded, densely
defined operator acting in $X$ with a compact resolvent whose norm
is constant in a neighbourhood of zero; moreover $X$ is isometric to
a Hilbert space on a subspace of co-dimension $2$. There is also a bounded linear
operator acting on the same space whose resolvent norm is constant in a neighbourhood of zero.
It is shown that similar examples cannot exist in the co-dimension $1$ case.
\end{abstract}

\section{Introduction}

The $\varepsilon$--pseudospectrum of a closed densely defined linear
operator $A$ on a Banach space $X$  is usually defined as
\begin{equation}\label{strict}
\sigma_\varepsilon(A) = \{\lambda \in \mathbb{C} : \ \|(A - \lambda
I)^{-1}\| > 1/\varepsilon\}
\end{equation}
or as
\begin{equation}\label{nonstrict}
\Sigma_\varepsilon(A) = \{\lambda \in \mathbb{C} : \ \|(A - \lambda
I)^{-1}\| \ge 1/\varepsilon\} ,
\end{equation}
where $\varepsilon > 0$ and $\|(A - \lambda I)^{-1}\|$ is assumed to
be infinite if $\lambda \in \sigma(A)$ (see, e.g.,
\cite{Bo,BG,BGS,Da,Tre,TE} and \cite{BGj,GHP}). The difference
between $\Sigma_\varepsilon(A)$ and $\sigma_\varepsilon(A)$ is the (closed) level set
\begin{equation}\label{levelset}
\{\lambda \in \mathbb{C} : \ \|(A - \lambda I)^{-1}\| =
1/\varepsilon\}
\end{equation}
and it is natural to ask whether this set may have an open subset,
in which case $\Sigma_\varepsilon(A)$ is strictly larger than the
closure of $\sigma_\varepsilon(A)$. If this happens at a point
$\varepsilon = \varepsilon_0$, then the
$\varepsilon$--pseudospectrum of $A$ jumps as $\varepsilon$ passes
through $\varepsilon_0$.

The question on whether or not the level set \eqref{levelset} may
have an open subset goes back to J.~Globevnik (see \cite{G2}) who
showed that the resolvent norm of a bounded linear operator on a
Banach space cannot be constant on an open set if the underlying
space is complex uniformly convex (see Definition \ref{uconvsmooth}
below). An easy duality argument shows that this remains true if the
dual of the underlying Banach space, rather than the space itself,
is complex uniformly convex. Hence the class of spaces to which
Globevnik's result applies includes Hilbert spaces and $L^p(S,
\Sigma, \mu)$ with $1 \leq p \leq \infty$, where $(S, \Sigma, \mu)$
is an arbitrary measure space (see \cite{S2} or Appendix~A below).

An example of a bounded linear operator on a Banach space for which
the resolvent norm is constant in a neighbourhood of zero was
constructed in \cite{S2} and then modified in \cite{SS} to make the
underlying space separable, reflexive and strictly convex.

According to the above, the resolvent norm of a bounded linear
operator on a Hilbert space cannot be constant on an open set, and
there have been several claims in the literature that the same is
true for a closed densely defined operator on a Hilbert space. A
counterexample to those claims was constructed in \cite{S2}, where
it was shown that there exists a block diagonal closed densely
defined operator on $\ell_2(\mathbb{N})$ with $2\times 2$ blocks,
such that its resolvent norm is constant in a neighbourhood of zero.
It is natural to ask whether this phenomenon can occur for
``non-pathological'' unbounded operators arising in ``real''
applications.

It was shown in \cite{S3} that the answer to this question is
negative for semigroup generators: the resolvent norm of the
infinitesimal generator of a $C_0$ semigroup on a Banach space
cannot be constant on an open set if the underlying space is complex
uniformly convex. This result can also be easily derived with the help of the
Hille-Yosida theorem from a recent result by
S. B\"ogli and P. Siegl (\cite{BS}) about closed operators
acting on a complex uniformly convex Banach space, which says that
if the resolvent norm is constant on an open set, then this constant is the global minimum.
The examples from \cite{S2} and \cite{SS} show
that one cannot drop the requirement of complex uniform convexity in
these results, relax it to complex strict convexity or even replace it with
strict convexity.

Here, we consider another important class of unbounded operators,
namely operators with compact resolvents. In Theorem~\ref{strictlyA}
we show that the resolvent norm of such an operator cannot be
constant on an open set if the underlying Banach space is complex
\textbf{strictly} convex. So, the situation here is slightly different
from what one has for bounded operators. Unlike previous results, Theorem~\ref{strictlyA} is applicable to small perturbations of operators
such as $(Af)(m,n)=(m+in)f(m,n)$ acting in
$l^2(\mathbb{N}\times\mathbb{N})$; see the case $a=0$ of \cite[Theorem~11.1.3]{Da}. This operator has compact
resolvent and the resolvent norm is uniformly bounded away from $0$
for $\lambda\notin \sigma(A)$. In Theorem~\ref{exampA} we show that
the example in \cite{S2} can be modified to produce an operator on a
suitable Banach space $\Xfont$ for which the resolvent is compact and the
resolvent norm is constant in a neighbourhood of zero. Perhaps the most interesting part of this result is that
$\Xfont$ is isometric to
a Hilbert space on a subspace of co-dimension $2$, and we show in Theorem~\ref{exampB} that the same formula as
in the example in \cite{S2} defines a bounded linear operator on $\Xfont$ whose
resolvent norm is constant in a neighbourhood of zero. Theorems~\ref{co1b} and \ref{co1a} show
that similar examples cannot exist in the co-dimension $1$ case.

We use $A$ to denote
a closed densely defined operator with a compact resolvent and $B$ to denote a bounded linear operator or
the infinitesimal generator of a $C_0$ semigroup. We denote by $X$ and $Y$  Banach spaces
satisfying certain convexity hypothesis, while the calligraphic letters
$\Xfont$ and $\Yfont$ are used to denote the spaces $\ell^2(\mathbb{Z})\oplus \mathbb{C}^2$ and $Y\oplus \mathbb{C}$
equipped with suitable norms.
We devote Appendix~A to presenting some known results on convexity properties and absolute norms that are used in the paper.

We conclude this introduction by listing the theorems in the order that they appear below and make brief comments about each one; these comments do not pretend to give full descriptions
of the conditions in the theorems. The symbol $\exists$ denotes that the theorem proves the existence of an operator in some stated
class whose resolvent norm has at least one level set with non-empty interior, while N denotes that no operator in
some stated class possesses a resolvent whose norm has such a level set.
\begin{center}
\begin{tabular}{clcl}
\textbf{Theorem}&\textbf{Banach space}&$\exists$/N&\textbf{Comments}\\
\ref{strictlyA} & $X$  & N&  $A$ has a compact resolvent;\\
\ref{exampA} & $\ell^2(\mathbb{Z})\oplus \mathbb{C}^2$  & $\exists$&  $A$ has a compact resolvent;\\
\ref{exampB} & $\ell^2(\mathbb{Z})\oplus \mathbb{C}^2$  & $\exists$&  $B$ is bounded;\\
\ref{co1} & $Y\oplus_\infty\mathbb{C}$  & $\exists$&  $A$  has a compact resolvent, \\ &&& $B$ is  bounded;\\
\ref{co1b} & $Y\oplus\mathbb{C}$ & N& $A$ has a compact resolvent;\\
\ref{co1a} & $Y\oplus\mathbb{C}$ & N& $B$ generates a $C_0$ semigroup.
\end{tabular}
\end{center}

\section{Main results}

Some of our results depend on the following classical theorem; see  \cite[Theorem~2.16.5]{HP} or \cite[Ch. III, Theorems 5.30 and 6.22]{Kat}.

\begin{theorem}\label{dualresolvthm}
Let $H$ be a closed densely defined operator acting in the Banach space $X$. Then its dual $H^\ast$ has a weak* dense domain in
$X^\ast$ and its graph is weak* closed. Moreover $\Spec(H)=\Spec(H^\ast)$ and
\begin{equation}
\{(H-\lambda I)^{-1}\}^\ast=(H^\ast-\lambda I)^{-1}\label{AA*resthm}
\end{equation}
for all $\lambda\notin\Spec(H)$. In particular
\[
\|(H-\lambda I)^{-1}\|=\|(H^\ast-\lambda I)^{-1}\|
\]
for all $\lambda\notin\Spec(H)$.
\end{theorem}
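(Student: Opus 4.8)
The plan is to build everything from the graph of $H^*$ and then treat the resolvent through a boundedness-below/duality argument. First I would define $H^*$ via annihilators: writing $G(H)\subseteq X\times X$ for the graph (norm-closed, since $H$ is closed) and $V(x,y)=(y,-x)$, one checks directly that a pair $(\phi,\psi)\in X^*\times X^*$ lies in $G(H^*)$ exactly when $\phi(Hx)-\psi(x)=0$ for all $x\in\Dom(H)$, i.e. precisely when $(\phi,\psi)$ annihilates $V\,G(H)$. Single-valuedness of $H^*$, so that it is genuinely an operator, uses density of $\Dom(H)$; and weak* closedness of $G(H^*)$ is then immediate, since the annihilator of any subset of $X\times X$ is weak* closed in $(X\times X)^*=X^*\times X^*$.

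For the weak* density of $\Dom(H^*)$ I would use the bipolar theorem. Density is equivalent to ${}^\perp\Dom(H^*)=\{0\}$. Computing $G(H)^\perp=\{(-H^*\chi,\chi):\chi\in\Dom(H^*)\}$, one sees that $x_0\in{}^\perp\Dom(H^*)$ holds iff $(0,x_0)$ annihilates $G(H)^\perp$, i.e. iff $(0,x_0)\in{}^{\perp}\!\big(G(H)^\perp\big)$. This is the only place the closedness hypothesis enters: the bipolar theorem gives ${}^{\perp}(G(H)^\perp)=\overline{G(H)}=G(H)$, and $(0,x_0)\in G(H)$ forces $x_0=H0=0$, yielding weak* density.

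The resolvent identity \eqref{AA*resthm} I would obtain from a single lemma: if $S$ is closed, densely defined, and a bijection of $\Dom(S)$ onto $X$ with $S^{-1}\in B(X)$, then $S^*$ is a bijection of $X^*$ with $(S^*)^{-1}=(S^{-1})^*$. Setting $R=S^{-1}$, the computations $\langle R^*\phi,Sx\rangle=\langle\phi,x\rangle$ for $x\in\Dom(S)$, and $\langle R^*S^*\psi,x\rangle=\langle\psi,x\rangle$ for $\psi\in\Dom(S^*)$, $x\in X$, show $S^*R^*=I$ and $R^*S^*=I$, so $(S^*)^{-1}=R^*$ is bounded. Applying this with $S=H-\lambda I$, and noting $(H-\lambda I)^*=H^*-\lambda I$ with equal domains since the Banach-space transpose carries no conjugation, gives $(H^*-\lambda I)^{-1}=\{(H-\lambda I)^{-1}\}^*$ for every $\lambda\in\rho(H)$; hence $\rho(H)\subseteq\rho(H^*)$, and $\|R^*\|=\|R\|$ delivers the norm equality.

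The main obstacle is the reverse inclusion $\Spec(H)\subseteq\Spec(H^*)$, which cannot be obtained by passing to second duals since $X$ need not be reflexive. I would instead show directly that $\lambda\in\rho(H^*)$ implies $\lambda\in\rho(H)$. Injectivity of $S^*=H^*-\lambda I$ gives $\mathrm{Range}(S)^\perp=\ker S^*=\{0\}$, so $S=H-\lambda I$ has dense range. Surjectivity of $S^*$ with bounded inverse means $\|\psi\|\le C\|S^*\psi\|$ on $\Dom(S^*)$; then for $x\in\Dom(S)$ and $\|\phi\|\le 1$, writing $\phi=S^*\psi$ with $\|\psi\|\le C$ gives $|\langle\phi,x\rangle|=|\langle\psi,Sx\rangle|\le C\|Sx\|$, whence $\|x\|\le C\|Sx\|$. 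Thus $S$ is bounded below, and being closed it has closed range, which together with density forces $\mathrm{Range}(S)=X$; so $S$ is a bijection with bounded inverse and $\lambda\in\rho(H)$. The delicate point throughout is that $\Dom(S^*)$ is only weak* dense, so one must route the estimate through the surjectivity of $S^*$ rather than attempt to approximate functionals in norm.
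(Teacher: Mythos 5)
Your proposal is correct, but there is nothing in the paper to compare it against line by line: the paper does not prove this theorem at all, quoting it as classical with references to Hille--Phillips (Theorem 2.16.5) and Kato (Ch.~III, Theorems 5.30 and 6.22). Your argument is a sound self-contained reconstruction of exactly that classical material, and all the delicate points are handled properly: the identification of $G(H^\ast)$ with the annihilator of $V\,G(H)$ (which gives weak* closedness for free), the bipolar theorem applied to $G(H)$ to get weak* density of $\Dom(H^\ast)$ (this is where closedness of $H$ is genuinely needed), the two one-line computations showing $S^\ast R^\ast = I$ and $R^\ast S^\ast = I$ for $S = H - \lambda I$, $R = S^{-1}$ (note your observation that the Banach-space transpose involves no conjugation is what makes \eqref{AA*resthm} read with $\lambda$ rather than $\bar\lambda$), and, most importantly, the reverse inclusion $\Spec(H)\subseteq\Spec(H^\ast)$. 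That last step is the one place where a naive dualization fails for non-reflexive $X$, and your route --- deducing dense range of $S$ from $\ker S^\ast = \{0\}$, and the lower bound $\|x\| \le C\|Sx\|$ by factoring an arbitrary norming functional $\phi$ through the surjectivity of $S^\ast$ rather than by any density-in-norm argument, then closing the range via closedness of $S$ --- is precisely the standard correct workaround. I find no gap.
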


Our next theorem holds when $X$ is a Hilbert space, and appears to be new even in that case.

\begin{theorem}\label{strictlyA}
Suppose a  Banach space $X$ or its dual $X^*$ is complex strictly convex in the sense of Definition~\ref{uconvsmooth}, and $A : X \to X$ is a closed densely defined operator with
a compact resolvent $R(\lambda) := (A - \lambda I)^{-1}$. Let
$\Omega$ be an open subset of the resolvent set of $A$. If
$\|R(\lambda)\| \le M$ for all $\lambda \in \Omega$, then
$\|R(\lambda)\| < M$ for all $\lambda \in \Omega$.
\end{theorem}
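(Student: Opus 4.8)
The plan is to argue by contradiction: suppose the conclusion fails, so there is a point $\lambda_0$ in some connected component of $\Omega$ with $\|R(\lambda_0)\| = M$, this being the largest value of $\|R(\lambda)\|$ on $\Omega$. I would treat the case in which $X$ itself is complex strictly convex first, and then deduce the case of $X^\ast$ by duality. The goal is to manufacture from $R$ a genuinely $X$-valued analytic function whose norm attains an interior maximum at $\lambda_0$, and then exploit complex strict convexity through a vector-valued maximum modulus principle.

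First I would use compactness to extract a ``maximising vector''. Choosing unit vectors $x_n \in X$ with $\|R(\lambda_0)x_n\| \to \|R(\lambda_0)\| = M$, compactness of $R(\lambda_0)$ lets me pass to a subsequence along which $R(\lambda_0)x_n \to y_0$ for some $y_0 \in X$ with $\|y_0\| = M$. I do not claim that $\|R(\lambda_0)\|$ is attained at a vector --- a compact operator on a general Banach space need not attain its norm --- and this is exactly why the limit $y_0$, rather than an honest maximiser, is the right object. Using the resolvent identity in the form $R(\lambda) = [I - (\lambda - \lambda_0)R(\lambda_0)]^{-1}R(\lambda_0)$, valid for $\lambda$ close to $\lambda_0$, I would define
\[
f(\lambda) := [I - (\lambda - \lambda_0)R(\lambda_0)]^{-1}\,y_0 ,
\]
which is an $X$-valued analytic function on a neighbourhood of $\lambda_0$. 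Since $[I - (\lambda - \lambda_0)R(\lambda_0)]^{-1}$ is a fixed bounded operator and $R(\lambda_0)x_n \to y_0$, one has $f(\lambda) = \lim_n R(\lambda)x_n$ pointwise, whence $\|f(\lambda)\| \le \|R(\lambda)\| \le M$ for all such $\lambda$, while $f(\lambda_0) = y_0$ has norm $M$.

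The decisive step is then the strong maximum modulus principle for analytic functions with values in a complex strictly convex space (Thorp--Whitley): since $f(\lambda_0)/M$ is a complex extreme point of the unit ball of $X$ and $\|f\|$ has an interior maximum at $\lambda_0$, the function $f$ must be constant near $\lambda_0$. In particular $f'(\lambda_0) = 0$. A direct computation from the Neumann expansion of $[I - (\lambda - \lambda_0)R(\lambda_0)]^{-1}$ gives $f'(\lambda_0) = R(\lambda_0)y_0$, so $R(\lambda_0)y_0 = 0$; since $R(\lambda_0) = (A - \lambda_0 I)^{-1}$ is injective, this forces $y_0 = 0$, contradicting $\|y_0\| = M > 0$.

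For the case in which $X^\ast$ is complex strictly convex I would run the identical argument for the analytic family $R(\lambda)^\ast$. By Theorem~\ref{dualresolvthm} one has $R(\lambda)^\ast = (A^\ast - \lambda I)^{-1}$ and $\|R(\lambda)^\ast\| = \|R(\lambda)\|$, and by Schauder's theorem $R(\lambda_0)^\ast$ is compact; the only new point is that injectivity of $R(\lambda_0)^\ast$ now comes from the density of the range $\Dom(A)$ of $R(\lambda_0)$. I expect the main obstacle to be the maximum modulus step: everything hinges on having the \emph{strong} (rigidity) form of the principle --- that an interior maximum of $\|f\|$ forces $f$ itself, not merely $\|f\|$, to be locally constant --- and on verifying that this is precisely what complex strict convexity of the target space delivers. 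The compactness hypothesis enters only to produce the vector $y_0$, turning the operator-valued resolvent into a vector-valued analytic function to which this principle applies.
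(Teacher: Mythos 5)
Your proof is correct, and it shares the paper's skeleton --- argue by contradiction, use compactness of $R(\lambda_0)$ to turn a norm-maximising sequence into an honest limit vector, then let complex strict convexity kill that vector --- but the convexity input is packaged through a genuinely different key tool. The paper first invokes the operator-valued maximum principle (or \cite[Theorem 2.1]{S2}) to make $\|R(\cdot)\|$ constant on $\Omega$, then uses the Globevnik--Vidav lemma \cite[Lemma 1.1]{GV} to obtain only the first-order estimate $\|R(0)+\lambda R^2(0)\|\le M$ for small $|\lambda|$, and contradicts Definition~\ref{uconvsmooth}(i) in its raw form: with $x=\lim_n R(0)u_n$ and $y:=rR(0)x\ne 0$ (injectivity of the resolvent enters there, exactly as in your final step) one gets $\|x+\zeta y\|\le 1$ for all $|\zeta|\le 1$, which complex strict convexity forbids. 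You instead assemble the full vector-valued analytic function $f(\lambda)=\lim_n R(\lambda)x_n$ via the Neumann series and cite the Thorp--Whitley strong maximum modulus theorem, whose hypothesis (every point of the unit sphere is a complex extreme point of the unit ball) is precisely Definition~\ref{uconvsmooth}(i); local constancy of $f$ then yields $f'(\lambda_0)=R(\lambda_0)y_0=0$, and injectivity finishes. What your route buys: it is shorter, it needs neither the norm-constancy step nor the GV lemma (only a disc around $\lambda_0$ inside $\Omega$ of radius less than $1/M$, so even connectedness of $\Omega$ is irrelevant), and it treats the dual case in one stroke by running the same argument on the pseudo-resolvent family $R(\lambda)^*$, with compactness from Schauder and injectivity from density of $\Dom(A)=\mathrm{ran}\,R(\lambda_0)$, whereas the paper splits into reflexive and non-reflexive cases. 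What the paper's route buys: it is self-contained modulo the elementary GV lemma, using complex strict convexity only through its definition, and that definitional, first-order-coefficient form of the argument is exactly what gets re-used in Theorem~\ref{co1b}, where the space $Y\oplus_\psi\mathbb{C}$ need not be complex strictly convex and one cannot simply quote Thorp--Whitley.
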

\begin{proof}
The proof is similar to that of \cite[Theorem 2.6]{S2}. One can assume without loss of generality that $\Omega$ is connected. Indeed, it
is sufficient to consider each connected component of $\Omega$ separately.

\emph{Part 1.}  We consider first the case in which $X$ is complex strictly convex.
Suppose that there exists $\lambda_0 \in \Omega$ such that $\|R(\lambda_0)\| = M$. Then  \cite[Theorem 2.1]{S2} or the maximum principle (see, e.g., \cite[Theorem 3.13.1]{HP} or \cite[Ch. III, Sect. 14]{DS}) imply that $\|R(\lambda)\| = M$, $\forall \lambda \in \Omega$.
Shifting the independent variable if necessary, we can assume that
$0 \in \Omega$.

According to \cite[Lemma 1.1]{GV}, there exists $r > 0$ such that
$$
\left\|R(0) + \lambda R^2(0)\right\| = \left\|R(0) + \lambda
R'(0)\right\| \le M, \ \ \ |\lambda| \le r.
$$
Since $\|R(0)\| = M$, there exist $u_n \in X$, $n \in \mathbb{N}$
such that $\|u_n\| = \frac1M$ and $\|R(0)u_n\| \to 1$ as $n \to
\infty$. Since $R(0)$ is compact, one can assume, after going to a
subsequence, that $R(0)u_n$ converges to a vector $x \in X$ and
$\|x\| = 1$. Then $y := r R(0)x \not= 0$ and
\begin{eqnarray*}
\|x + \zeta y\| = \lim_{n \to \infty} \left\|R(0)u_n + \zeta r
R^2(0)u_n\right\| \le
\left\|R(0) + \zeta r R^2(0)\right\|\, \|u_n\| \\
\le M\, \frac1M = 1 , \ \ \ |\zeta| \le 1 .
\end{eqnarray*}
The contradiction implies that there does not exist $\lambda_0\in\Omega$ such that $\| R(\lambda_0)\|=M$.

\emph{Part 2.}  Let us now consider the case where $X^*$  is complex strictly convex. If $X$
is reflexive, then $A^*$ is a closed, densely defined operator (see, e.g., \cite[Theorems 2.11.8
and 2.11.9]{HP} or \cite[Ch. III, Theorem 5.29]{Kat}) with
a compact resolvent, and our claim follows by duality from what has been proved in Part 1 above.
If $X$ is not reflexive, $A^*$ might not be
densely defined, but one can repeat the argument by using Theorem~\ref{dualresolvthm}. This implies that the resolvent $R^*(\lambda)$ of $A^\ast$ is compact and one-one on $X^\ast$, and that its range is weak* dense in $X^\ast$ and independent of $\lambda$. One can now proceed as in Part~1.
\end{proof}

Let us consider the following norm on $l_2(\mathbb{Z})$:
\begin{equation}
\|x\|_* = \max\left\{\|x'\|_2, |x_1|\right\}+ |x_0| , \ \ \ x =
(x_k)_{k \in \mathbb{Z}}, \ x' =  (x_k)_{k \in
\mathbb{Z}\setminus\{0, 1\}} .\label{starnormeq}
\end{equation}
It is easy to see that
\begin{eqnarray*}
&& \|x\|_* \le \|\tilde{x}\|_2 + |x_0| \le \sqrt{2}\, \|x\|_2 ,
\ \ \ \tilde{x} =  (x_k)_{k \in \mathbb{Z}\setminus\{0\}} ,\\
&& \|x\|_* \ge \frac{1}{\sqrt{2}}\, \|\tilde{x}\|_2 + |x_0|  \ge
\frac{1}{\sqrt{2}}\, \|x\|_2 .
\end{eqnarray*}
Hence
\begin{equation}
\frac{1}{\sqrt{2}}\, \|x\|_2 \le \|x\|_* \le \sqrt{2}\, \|x\|_2 , \
\ \ \forall x \in l_2(\mathbb{Z}) .\label{sqrt}
\end{equation}
Moreover $\|x\|_2=\|x\|_*$ if $x_0=x_1=0$. Letting $\Xfont$ denote the
space $l_2(\mathbb{Z})$ equipped with the norm $\|\cdot\|_*$, it
follows that $\Xfont$ is reflexive and separable.

Note that $\Xfont = \mathcal{H}\oplus \mathbb{C}^2$ where
\[
\mathcal{H}=\{ x\in l_2(\mathbb{Z}):x_0=x_1=0\}
\]
is a Hilbert space with the norm induced by $\| \cdot\|_\ast$.

\begin{theorem}\label{exampA}
There exists a closed, densely defined operator $A : \Xfont \to \Xfont$ with a
compact resolvent such that $\|(A - \lambda I)^{-1}\|$ is constant
in a neighbourhood of $\lambda = 0$.
\end{theorem}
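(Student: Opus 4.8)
The plan is to construct $A$ as a finite-rank (indeed $2\times 2$) perturbation of a normal operator on $l_2(\mathbb{Z})$, exploiting the special structure of the norm $\|\cdot\|_*$, which coincides with the Hilbert norm on the subspace $\mathcal{H}$ where $x_0=x_1=0$. The reference to \cite{S2} is the crucial guide: the known example there produces a bounded operator (or a block-diagonal unbounded operator) whose resolvent norm is constant near $0$; here I would take the same algebraic recipe and place it inside $\Xfont$ so that the operator acquires a compact resolvent. Concretely, I would let $A$ act diagonally on the $x'$ coordinates (those with $k\notin\{0,1\}$) by multiplication by a sequence $\mu_k$ with $|\mu_k|\to\infty$, which forces the resolvent to be compact on that part, and couple the two exceptional coordinates $x_0,x_1$ through a fixed $2\times 2$ matrix $T$ chosen exactly as in the \cite{S2} construction so as to make the resolvent norm constant.

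The key steps, in order, are as follows. First I would write $A$ explicitly as a block operator: a diagonal multiplier $D$ on $\mathcal{H}'=\overline{\mathrm{span}}\{e_k:k\neq 0,1\}$ with eigenvalues $\mu_k$ tending to infinity, plus a $2\times2$ block $T$ acting on $\mathrm{span}\{e_0,e_1\}$, with the two blocks decoupled so that $A=D\oplus T$ as an operator (but \emph{not} as an isometric direct sum, since the norm $\|\cdot\|_*$ entangles $e_1$ with the $x'$-part). Second, I would compute the resolvent $R(\lambda)=(A-\lambda I)^{-1}$, which splits as $(D-\lambda I)^{-1}\oplus(T-\lambda I)^{-1}$; compactness is immediate from $|\mu_k|\to\infty$, and I would verify $A$ is closed and densely defined with the natural maximal domain. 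Third, and this is the heart, I would compute $\|R(\lambda)\|_*$ for small $\lambda$ using the definition \eqref{starnormeq} of $\|\cdot\|_*$. Because the $2\times2$ block $T$ is chosen following \cite{S2} precisely so that the $\mathbb{C}^2$-part of the resolvent norm is constant, and because for $\lambda$ near $0$ the norm $\|R(\lambda)\|_*$ is attained on vectors concentrated in the $e_0,e_1$ coordinates (the $D$-part contributing a strictly smaller norm once $\lambda$ is near $0$ and the $\mu_k$ are pushed far from the origin), the maximum in \eqref{starnormeq} is governed entirely by the two exceptional coordinates. Fourth, I would check that the choice of $\mu_k$ and $T$ can be made compatible: the constant value $M$ of the $2\times2$ part must dominate $\sup_k|\mu_k-\lambda|^{-1}$ for $\lambda$ in a neighbourhood of $0$, which one arranges by taking the $\mu_k$ sufficiently large in modulus.

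The main obstacle I expect is the third step: verifying that the resolvent norm in the $\|\cdot\|_*$ norm is genuinely constant, not merely the $\mathbb{C}^2$-block norm. The difficulty is that $\|\cdot\|_*$ couples the coordinate $x_1$ (an eigenvector of the $2\times2$ block) with the tail $x'$ (eigenvectors of $D$) through the term $\max\{\|x'\|_2,|x_1|\}$, so the optimisation defining $\|R(\lambda)\|_*$ does not cleanly decouple, and one must show that the optimising input and output vectors still lie essentially in the $\{e_0,e_1\}$ plane. I would handle this by proving that the extremal vector can be taken with $x'=0$: since $D$ contributes norm at most $\sup_k|\mu_k-\lambda|^{-1}$, which is strictly below $M$ for $\lambda$ near $0$, any nonzero $x'$-component strictly decreases the ratio, so the supremum defining the operator norm is attained (via compactness and the closedness of the unit ball) on the two-dimensional plane where the \cite{S2} computation applies verbatim. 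The remaining steps —closedness, dense domain, compactness of $R(\lambda)$— are routine consequences of the diagonal structure and the norm equivalence \eqref{sqrt}, which guarantees $\|\cdot\|_*$ and $\|\cdot\|_2$ induce the same topology, so all the standard $l_2$ arguments transfer.
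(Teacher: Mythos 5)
Your plan founders on its central ingredient: the fixed $2\times 2$ matrix $T$ with locally constant resolvent norm does not exist. On $\mathrm{span}\{e_0,e_1\}$ the norm \eqref{starnormeq} reduces to $\|x\|_*=|x_0|+|x_1|$, i.e.\ to the $\ell_1$ norm on $\mathbb{C}^2$, which is complex uniformly convex \cite{G1}; since every operator on a finite-dimensional space is closed, densely defined and has compact resolvent, Theorem~\ref{strictlyA} (or already Globevnik's theorem \cite{G2} for bounded operators) shows that no matrix on this two-dimensional space has resolvent norm constant on an open set. The appeal to \cite{S2} rests on a misreading: the unbounded Hilbert-space example there is an \emph{infinite} orthogonal sum of $2\times 2$ blocks whose entries grow with the block index, and no single block has constant resolvent norm --- the constancy emerges only in the supremum over infinitely many blocks; the other example in \cite{S2} (Theorem 3.1 there, which is the actual model for the present theorem) is a weighted shift, not a block-diagonal operator. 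Your third step is therefore self-defeating: if you do prove that the extremizers lie in the $\{e_0,e_1\}$-plane, you conclude that $\|(A-\lambda I)^{-1}\|$ equals $\|(T-\lambda I)^{-1}\|_{\ell_1\to\ell_1}$ near $0$, which cannot be locally constant. Nor can the defect be repaired by letting the extremizers spread out: with the decoupled $A=D\oplus T$, both $\mathcal{H}$ and $\mathrm{span}\{e_0,e_1\}$ are invariant under $(A-\lambda I)^{-1}$, so $\|(A-\lambda I)^{-1}\|\ge \|(D-\lambda I)^{-1}\|_{2\to 2}=1/\mathrm{dist}\left(\lambda,\{\mu_k\}\right)$ and $\|(A-\lambda I)^{-1}\|\ge\|(T-\lambda I)^{-1}\|_{\ell_1\to\ell_1}$, neither of which is ever locally constant, and the coupled supremum is assembled from these quantities together with moduli of rational functions of $\lambda$; your proposal supplies no mechanism by which such an expression could be constant.

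What the paper actually does is the opposite of decoupling. It takes $A$ to be the weighted shift $(Ay)_k=\delta^{-|k|}y_{k+1}$ with $\delta=\frac14$, whose inverse has weights $\beta_k=\delta^{|k-1|}\to 0$ (hence is compact) and which mixes the two exceptional coordinates with the tail. The constancy is then a two-sided estimate: an upper bound $\|(A-\lambda I)^{-1}\|\le 1$ for $|\lambda|\le\delta$, proved using both components of the norm (the $\ell_1$ component, via $|x_k|\le 1-|x_0|$ for $k\ne 0$, controls $|y_0|+|y_1|$, while the $\ell_\infty$ component absorbs the $\ell_2$ tail), and a lower bound coming from the single test vector $e_0$: the image $z=(A-\lambda I)^{-1}e_0$ has $z_1=\beta_1=1$ exactly, for every $|\lambda|\le\delta$, whence $\|z\|_*\ge 1$ independently of $\lambda$. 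It is precisely this interaction between the shift and the two distinguished coordinates --- unavailable for a direct sum $D\oplus T$ --- that produces a $\lambda$-independent resolvent norm.
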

\begin{proof}
The proof is similar to that of Theorem 3.1 in \cite{S2}. We suppose
throughout that $\delta=\frac14$ and that $\lambda \in \mathbb{C}$ is arbitrary subject to
\begin{equation}
|\lambda| \leq \delta.\label{exampeq0}
\end{equation}
The same calculations are applicable for any smaller positive value of  $\delta$.

\emph{Part 1.} Let $A$ be the weighted shift
operator defined by
$$
(Ay)_k = \delta^{-|k|} y_{k + 1} , \ \ \ k \in \mathbb{Z} ,
$$
where $\Dom(A)$ is the set of all $y\in \Xfont$ for which $Ay\in \Xfont$. Since $\Dom(A)$ contains
all sequences with finite support, it is dense in
$\Xfont$. It is clear that $A : \Dom(A) \to \Xfont$ is invertible and
\begin{equation}
(A^{-1}x)_k = \beta_k x_{k - 1} , \ \ \ k \in \mathbb{Z}
,\label{exampeq1}
\end{equation}
for all $x\in \Xfont$, where
\begin{equation}
\beta_k = \delta^{|k-1|}, \ \ \ k \in \mathbb{Z}.\label{exampeq2}
\end{equation}
The formula $\lim_{k\to\pm\infty} \beta_k=0$ implies that $A^{-1} :
\Xfont \to \Xfont$ is a compact operator.

\emph{Part 2.}  If one considers $A^{-1}$ as an operator on $l_2(\mathbb{Z})$ equipped with the standard norm,
then it is clear that $\|A^{-1}\| = 1$ (see \eqref{exampeq1}, \eqref{exampeq2}), and hence
\begin{equation}
(A - \lambda I)^{-1} = \sum_{j = 0}^\infty \lambda^j A^{-(j + 1)} , \ \ \ |\lambda| < 1 .
\label{exampeq4}
\end{equation}
Since $\Xfont$ coincides with $l_2(\mathbb{Z})$ as a set and is equipped with a norm equivalent to that of $l_2(\mathbb{Z})$,
we conclude that $A - \lambda I : \Dom(A) \to \Xfont$ is invertible
when $|\lambda| < 1$, and hence when $\lambda$ satisfies (\ref{exampeq0}), and that \eqref{exampeq4} remains valid in this setting.
(Actually, it is not difficult to show that the equality $\|A^{-1}\| = 1$ holds for the operator $A^{-1} : \Xfont \to \Xfont$. Equality \eqref{exampeq8} below, which is the main claim of the theorem, is a considerably stronger statement.)

\emph{Part 3.} Take an arbitrary $x \in \Xfont$ such that $\|x\|_* \le
1$, and note that this implies $| x_k|\le 1-| x_0|$ for all $k\not=
0$. Assume that $\lambda \in \mathbb{C}$ satisfies (\ref{exampeq0}).
Let $y = (A - \lambda I)^{-1}x$. Since
\[
(A^{-(j + 1)}x)_k  = \beta_k\beta_{k-1}\cdots\beta_{k - j}\, x_{k - 1 - j}
\]
for all $k\in \mathbb{Z}$ and all $j \in \mathbb{N}\cup\{0\}$, one has
\begin{eqnarray*}
|y_0|&=& |\beta_0x_{-1}+\lambda\beta_0\beta_{-1}x_{-2}+
\lambda^2\beta_0\beta_{-1}\beta_{-2}x_{-3}+\cdots|\\
&\le  & \delta(1-|x_0| )(1+|\lambda|+|\lambda|^2+\cdots)\\
&=&\delta\frac{1-|x_0|}{1-|\lambda|},
\end{eqnarray*}
and
\begin{eqnarray*}
|y_1|&=& |\beta_1x_{0}+\lambda\beta_1\beta_{0}x_{-1}+
\lambda^2\beta_1\beta_{0}\beta_{-1}x_{-2}+\cdots|\\
&\le  & |x_0|+\delta(1-|x_0| )(|\lambda|+|\lambda|^2+\cdots)\\
&=&|x_0|+\delta|\lambda|\frac{1-|x_0|}{1-|\lambda|}.
\end{eqnarray*}
Combining these bounds yields
\begin{eqnarray}
|y_1|+|y_0|&\le & |x_0|+\delta|\lambda|\frac{1-|x_0|}{1-|\lambda|}
+\delta\frac{1-|x_0|}{1-|\lambda|}\nonumber\\
&= & |x_0|+\delta\frac{1+|\lambda|}{1-|\lambda|}(1-|x_0|)\nonumber\\
&\leq & |x_0|+(1-|x_0|)\nonumber\\
&= &1.\label{exampeq5}
\end{eqnarray}

\emph{Part 4.} We use the fact that $\|x\|_*\le 1$ implies
$|x_k|\leq 1$ for all $k\in\mathbb{Z}$. Then
\begin{eqnarray*}
|y_k|&=& |\beta_kx_{k-1}+\lambda\beta_k\beta_{k-1}x_{k-2}+
\lambda^2\beta_k\beta_{k-1}\beta_{k-2}x_{k-3}+\cdots|\\
&\le  & \delta^{|k-1|}(1+|\lambda|+|\lambda|^2+\cdots)\\
&=&\frac{\delta^{|k-1|}}{1-|\lambda|}\le \frac43\,\delta^{|k-1|}.
\end{eqnarray*}
In particular $|y_0|\leq \frac13$.
Hence
\begin{eqnarray}
\left(\sum_{k \not= 0, 1} |y_k|^2\right)^{1/2} + |y_0| \le
\left( \left( \frac{4}{3} \right)^2 \sum_{k \le -1} \delta^{2(1-k)} +
\left( \frac{4}{3} \right)^2\sum_{k \ge 2} \delta^{2(k - 1)} \right)^{1/2} + \frac13 \nonumber\\
\le \frac43 \left( \frac{\delta^4}{1 - \delta^2} +\frac{\delta^2}{1 -
\delta^2}\right)^{1/2} + \frac13 =
\frac{4\delta}{3} \left( \frac{1+\delta^2}{1 - \delta^2}\right)^{1/2} +\frac13<1.
\hspace*{2em} \label{exampeq6}
\end{eqnarray}

\emph{Part 5.} By combining the bounds (\ref{exampeq5}) and
(\ref{exampeq6}) we obtain $\|y\|_* \le 1$ and hence
\begin{eqnarray}
\|(A - \lambda I)^{-1}\| \le 1 .\label{exampeq7}
\end{eqnarray}
On the other hand, let $z = (A - \lambda I)^{-1} e_0$,  where $e_0
:= (\dots, 0, 0, 1, 0, 0, \dots)$ with 1 at the $0^{\rm th}$ place.
Then  $\|e_0\|_* = 1$,
$$
z_k = \left\{ \begin{array}{ll}
0 , &   k \le 0 , \\
\beta_1 , &   k = 1 , \\
\beta_1\beta_2\cdots \beta_k \lambda^{k - 1} , \ \ &   k \ge 2 ,
\end{array} \right.
$$
and
$$
\left\|(A - \lambda I)^{-1} e_0\right\|_* = \|z\|_* \ge \beta_1 = 1.
$$
By combining this with (\ref{exampeq7}), we finally deduce that
\begin{equation}
 \|(A - \lambda I)^{-1}\|= 1 \label{exampeq8}
\end{equation}
under the condition (\ref{exampeq0}).
\end{proof}

\begin{theorem}\label{exampB}
Let $\Xfont$ denote the space $l_2(\mathbb{Z})$ equipped with the norm $\|\cdot\|_*$ defined in \eqref{starnormeq}.
Then there exists an invertible bounded linear operator $B : \Xfont \to \Xfont$ such that
$\|(B - \lambda I)^{-1}\|$ is constant in a neighbourhood of $\lambda = 0$.
\end{theorem}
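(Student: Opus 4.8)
The plan is to take for $B$ the bounded weighted shift given by the same formula as the bounded operator constructed in \cite{S2}. On $\Xfont$ this may be realised as the bounded counterpart of the operator $A$ of Theorem~\ref{exampA}: the backward weighted shift $(Bx)_k = b_k x_{k+1}$ whose weights agree with those of $A$ at $k=0$ but are capped for $|k|\ge 1$, so that $b_0 = 1$ and $b_k = \delta^{-1}$ for $k\neq 0$. Since $1\le b_k\le \delta^{-1}$, the operator $B$ is bounded and boundedly invertible on $l_2(\mathbb{Z})$, hence on $\Xfont$ by \eqref{sqrt}; its inverse is the forward weighted shift $(B^{-1}x)_k = v_k x_{k-1}$ with $v_1=1$ and $v_k=\delta$ for $k\neq 1$. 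In particular $0$ belongs to the resolvent set, so $\|(B-\lambda I)^{-1}\|_*$ is finite near $\lambda=0$ and equals $\|B^{-1}\|_*$ at $\lambda=0$. First I would record the Neumann expansion
\[
(B-\lambda I)^{-1}=\sum_{j=0}^{\infty}\lambda^{j}B^{-(j+1)},\qquad |\lambda|<\delta ,
\]
justifying its convergence on $\Xfont$ by means of \eqref{sqrt}, and write out $((B-\lambda I)^{-1}x)_k=\sum_{j\ge 0}\lambda^{j}v_k v_{k-1}\cdots v_{k-j}\,x_{k-1-j}$, which has exactly the shape of the expansion in Theorem~\ref{exampA} with the decaying weights $\beta_k=\delta^{|k-1|}$ replaced by the non-decaying $v_k$.

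The heart of the argument is the upper bound $\|(B-\lambda I)^{-1}\|_*\le 1$. Writing $y=(B-\lambda I)^{-1}x$ with $\|x\|_*\le 1$ and using $\|y\|_*=\max\{\|y'\|_2,|y_1|\}+|y_0|$, it suffices to prove the two scalar inequalities $|y_1|+|y_0|\le 1$ and $\|y'\|_2+|y_0|\le 1$, which together force $\|y\|_*\le 1$. The first is obtained as in Part~3 of Theorem~\ref{exampA}: the terms of $y_0$, and the tail of $y_1$, reach only into indices $\le -1$, where every weight equals $\delta$, so these contributions carry a factor $\delta$ and sum geometrically against $\|x'\|_2\le 1-|x_0|$, giving $|y_1|+|y_0|\le |x_0|+c\,(1-|x_0|)\le 1$ with $c<1$.

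The genuinely new point, and the step I expect to be the main obstacle, is the second inequality. For $A$ the corresponding bound in Part~4 of Theorem~\ref{exampA} rested on the pointwise estimate $|y_k|\le \delta^{|k-1|}/(1-|\lambda|)$, whose right-hand side is square-summable because $\beta$ decays; here the weights do not decay and this route yields only $|y_k|\lesssim \delta$, which is not summable. Instead I would retain the $l_2$-structure of $x$ and estimate the map $x\mapsto y'$ by a convolution (Young) argument. The decisive observation is that the only large weight, $v_1=1$, sends the component $x_0$ into $y_1$, which is \emph{excluded} from $y'=(y_k)_{k\neq 0,1}$; every weight that genuinely contributes to $y'$ has size $\delta$, so that $\|y'\|_2\le (\delta+O(\delta^2))\,\|x\|_2\le \sqrt{2}\,(\delta+O(\delta^2))$, and together with $|y_0|\le \delta(1-|x_0|)(1+O(\delta^2))$ this gives $\|y'\|_2+|y_0|<1$ for $|\lambda|\le\delta$. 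Checking that the $\lambda$-dependent corrections do not spoil the estimate, and that the cap $\delta^{-1}$ (or any sufficiently large bounded value) makes the constants close, is where the real work lies.

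It remains to supply the matching lower bound by testing on $e_0$. A direct computation gives $z:=(B-\lambda I)^{-1}e_0$ with $z_k=0$ for $k\le 0$, $z_1=v_1=1$, and $z_k=(\lambda\delta)^{k-1}$ for $k\ge 2$; thus the dominant component $z_1=1$ is independent of $\lambda$ and sits inside the $\max$ in \eqref{starnormeq}, while $z_0=0$ and $\|z'\|_2^2=(\lambda\delta)^2/(1-(\lambda\delta)^2)<1$. Hence $\|z\|_*=\max\{\|z'\|_2,1\}=1$, so $\|(B-\lambda I)^{-1}\|_*\ge 1$, and with the upper bound this yields $\|(B-\lambda I)^{-1}\|_*\equiv 1$ on $|\lambda|\le\delta$. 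It is precisely the asymmetric, non-strictly-convex shape of $\|\cdot\|_*$—the large weight routing the mass into the $\max$-slot at index $1$ while index $0$ stays clear—that pins the resolvent norm at $\|B^{-1}\|_*=1$; note that, in contrast to Theorem~\ref{exampA}, the resolvent of this $B$ is not compact.
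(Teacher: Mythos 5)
Your proposal is correct and is essentially the paper's own proof up to a scalar normalisation: the paper takes inverse-shift weights $\beta_1=M>3$, $\beta_k=1$ otherwise (so its operator is your $B$ divided by $M=\delta^{-1}$ and its constant is $M$ rather than $1$), proves the upper bound by exactly your decomposition $y'=v'+w'$ with Young's inequality for the convolution part together with direct geometric-series bounds on $|y_0|$ and $|y_1|$, and obtains the lower bound from the same test vector $e_0$. The step you flag as the main obstacle does close numerically: with $\delta=\tfrac14$ and $|\lambda|\le\tfrac14$ one gets $\|v'\|_2\le\sqrt{2}\,\delta/(1-\delta|\lambda|)$, $\|w'\|_2\le\sqrt{2}\,(1-\delta)\,\delta|\lambda|/(1-\delta^2|\lambda|^2)$ and $|y_0|\le\delta/(1-\delta|\lambda|)$, whose sum is approximately $0.71<1$.
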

\begin{proof}
\emph{Part 1.}
Let $M > 3$ and let $B$ be the weighted shift operator
$$
(Bx)_k = \alpha_k x_{k + 1} , \ \ \ k \in \mathbb{Z} ,
$$
where
$$
\alpha_k = \left\{ \begin{array}{cl}
\frac{1}{M}\,  , &   k = 0 , \\
1 , &   k \not= 0 .
\end{array} \right.
$$
It is clear that $B$ is invertible on $\Xfont$ and that
$$
(B^{-1}y)_k = \beta_k y_{k - 1} , \ \ \ k \in \mathbb{Z} ,
$$
where
$$
\beta_k = \frac{1}{\alpha_{k - 1}} = \left\{ \begin{array}{cl}
M  , &   k = 1 , \\
1 , &   k \not= 1 .
\end{array} \right.
$$
As in Part 2 of the proof of Theorem \ref{exampA}, one can
 consider $B^{-1}$ as an operator on $l_2(\mathbb{Z})$ equipped with the standard norm.
Then it is clear that $\|B^{-1}\| = M$, and hence
\begin{equation}
(B - \lambda I)^{-1} = \sum_{j = 0}^\infty \lambda^j B^{-(j + 1)} , \ \ \ |\lambda| <  \frac{1}{M}\, .
\label{exampeq4B}
\end{equation}
Again, since $\Xfont$ coincides with $l_2(\mathbb{Z})$ as a set and is equipped with a norm equivalent to that of $l_2(\mathbb{Z})$,
we conclude that $B - \lambda I : \Xfont \to \Xfont$ is invertible
when $|\lambda| < \frac{1}{M}$, and that \eqref{exampeq4B} remains valid in this setting.
(It is not difficult to show that the equality $\|B^{-1}\| = \frac{1}{M}$ holds for the operator $B^{-1} : \Xfont \to \Xfont$.)

\emph{Part 2.}
Take an arbitrary $x \in \Xfont$ such that $\|x\|_* \le 1$ and an
arbitrary $\lambda \in \mathbb{C}$ such that $|\lambda| < \frac{1}{M}$.
Let $y = (B - \lambda I)^{-1}x$. Since  $(B^{-(j + 1)}x)_k = \beta_k\cdots\beta_{k - j}\, x_{k - 1 - j}$,
$j \in \mathbb{N}\cup\{0\}$, we get
\begin{eqnarray}
y_k &= &\left\{ \begin{array}{ll}
\sum_{j = 0}^\infty \lambda^j x_{k - 1 - j} , &   k \le 0 , \\
Mx_0 +  M \sum_{j = 1}^\infty \lambda^j x_{-j} , &   k = 1 , \\
\sum_{j = 0}^{k - 2} \lambda^j x_{k - 1 - j}  +
M \lambda^{k - 1} x_0 +
M \sum_{j = k}^\infty \lambda^j x_{k - 1 - j} , &   k \ge 2 .
\end{array} \right.\label{exampeq5B}
\end{eqnarray}
In this part we obtain upper bounds on $y$ by using the decomposition $y' = v' + w'$, where $y'$ is defined as in \eqref{starnormeq} and
\begin{eqnarray*}
&& v_k = \sum_{j = 0}^\infty \lambda^j x_{k - 1 - j} = \sum_{l= 1}^\infty \lambda^{l - 1} x_{k - l} , \ \ \  k \in \mathbb{Z}\setminus\{0, 1\}  ,\\
&& w_k = \left\{ \begin{array}{ll}
0 , &   k < 0 , \\
(M - 1) \sum_{j = k - 1}^\infty \lambda^j x_{k - 1 - j} , &   k \ge 2 .
\end{array} \right.
\end{eqnarray*}
Using the notation
$$
\nu = (\nu_l)_{l \in \mathbb{Z}}\, , \ \ \  \nu_l = \left\{ \begin{array}{ll}
0 , &   l < 1 , \\
 \lambda^{l - 1} , &   l \ge 1 ,
\end{array} \right.
$$
we get
\begin{eqnarray*}
\|v'\|_2 \le \|\nu\ast x\|_2 \le \|\nu\|_1 \|x\|_2 = \frac{1}{1 - |\lambda|}\,  \|x\|_2 \le \frac{\sqrt{2}}{1 - |\lambda|}\,  \|x\|_*
= \frac{\sqrt{2}}{1 - |\lambda|}
\end{eqnarray*}
(see \eqref{sqrt}).
Further,
\begin{eqnarray*}
|w_k| &\le&  (M - 1) \left(\sum_{j = k - 1}^\infty |\lambda|^{2j}\right)^{1/2}  \|x\|_2 = \frac{(M - 1) |\lambda|^{k - 1}}{(1 - |\lambda|^2)^{1/2}}\,
\|x\|_2 \\
&\le& \frac{\sqrt{2}\, (M - 1) |\lambda|^{k - 1}}{(1 - |\lambda|^2)^{1/2}}\, , \ \ \ k \ge 2, \\
\|w'\|_2 &\le& \frac{\sqrt{2}\, (M - 1)}{(1 - |\lambda|^2)^{1/2}} \left(\sum_{k = 2}^\infty |\lambda|^{2(k - 1)}\right)^{1/2} =
\frac{\sqrt{2}\, (M - 1) |\lambda|}{(1 - |\lambda|^2)}\, .
\end{eqnarray*}
Hence,
\begin{eqnarray}
\|y'\|_2 &\le& \|v'\|_2 + \|w'\|_2 \le \frac{\sqrt{2}}{1 - |\lambda|} + \frac{\sqrt{2}\, (M - 1) |\lambda|}{(1 - |\lambda|^2)} \nonumber\\
&=& \frac{\sqrt{2}\, (M|\lambda| + 1)}{(1 - |\lambda|^2)}\, .
\label{exampeq4c}
\end{eqnarray}

Since $\|x\|_* \le 1$ implies $|x_k| \le 1 - |x_0|$, $k \not= 0$,
we may use (\ref{exampeq5B}) directly to obtain
\begin{eqnarray}
&& |y_0| \le (1 - |x_0|) \sum_{j = 0}^\infty |\lambda|^j = \frac{1 - |x_0|}{1 - |\lambda|}\, ,\label{exampeq4d} \\
&& |y_1| \le M |x_0| + M (1 - |x_0|) \sum_{j = 1}^\infty |\lambda|^j = M |x_0| + M |\lambda|\, \frac{1 - |x_0|}{1 - |\lambda|}\, .\label{exampeq4e}
\end{eqnarray}

\emph{Part 3.}
Suppose additionally that $|\lambda| < \frac13 - \frac{1}{M}$. By using (\ref{exampeq4c}),(\ref{exampeq4d}) and (\ref{exampeq4e}) we obtain
\begin{eqnarray*}
\|y'\|_2 +  |y_0| &\le& \frac{\sqrt{2}\, (M|\lambda| + 1)}{(1 - |\lambda|^2)} + \frac{1 - |x_0|}{1 - |\lambda|} <
\frac{2 (M|\lambda| + 1)}{1 - |\lambda|} + \frac{1}{1 - |\lambda|} \\
&=& \frac{2 (M|\lambda| + 1) + 1}{1 - |\lambda|} < \frac{2\, \frac{M}3 + 1}{\frac23 + \frac1M} = \frac{2M + 3}{2M + 3}\, M = M
\end{eqnarray*}
and
\begin{eqnarray*}
|y_1| + |y_0| \le M |x_0| + (1 - |x_0|)\, \frac{1 + M |\lambda|}{1 - |\lambda|}
\le M|x_0| + \frac{\frac{M}{3}}{\frac23 + \frac{1}{M}}\, (1 - |x_0|) \\ =
M|x_0| + \frac{M}{2M + 3}\, M (1 - |x_0|) \le M.
\end{eqnarray*}
Therefore $\|y\|_* \le M$ and
\begin{eqnarray}
\|(B - \lambda I)^{-1}\| \le M , \ \ \ |\lambda| < \min\left\{\frac{1}{M}\,, \ \frac13 - \frac{1}{M}\right\} .\label{exampeq6a}
\end{eqnarray}

\emph{Part 4.} The proof is completed by combining (\ref{exampeq6a}) with a corresponding, but simpler, lower bound.
If $z = (B - \lambda I)^{-1} e_0$, where $e_0$ is the same as in Part 5 of the proof of Theorem \ref{exampA}, then
$$
z_k = \left\{ \begin{array}{ll}
0 , &   k \le 0 , \\
M , &   k = 1 , \\
M \lambda^{k - 1} , &   k \ge 2 ,
\end{array} \right.
$$
and $\|z\|_* = \left\|(B - \lambda I)^{-1} e_0\right\|_* \ge M$,
$|\lambda| < \frac{1}{M}$. Therefore
$$
\|(B - \lambda I)^{-1}\| \ge M , \ \ \ |\lambda| < \min\left\{\frac{1}{M}\,, \ \frac13 - \frac{1}{M}\right\} .
$$
\end{proof}

\section {Co-dimension one}

We have shown above that there exist a bounded operator and a closed densely defined operator with a
compact resolvent on $\Xfont = \left(l_2(\mathbb{Z})\oplus_\infty \mathbb{C}\right)\oplus_1\mathbb{C}$ whose
resolvent norms are constant in a neighbourhood of $0$. The norm in $X$ coincides with the $l_2$ norm
on a subspace of co-dimension two, and it is natural to ask whether similar examples exist in co-dimension one.

\begin{theorem}\label{co1}
There exist a separable, complex uniformly convex Banach space $Y$, a bounded linear operator
$B$ and a closed densely defined operator $A$ with a
compact resolvent on $\Yfont := Y\oplus_\infty \mathbb{C}$ whose
resolvent norms are constant in a neighbourhood of $0$.
\end{theorem}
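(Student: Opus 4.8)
The plan is to realise $\Yfont = Y\oplus_\infty\mathbb{C}$ with $Y=\ell_1(\mathbb{Z})$, which is separable and, as an $L^1$-space, complex uniformly convex (cf. the discussion of $L^p$ in the introduction). The point of this choice is that its dual $Y^\ast=\ell_\infty(\mathbb{Z})$ badly fails complex strict convexity, so that $\Yfont^\ast=\ell_\infty(\mathbb{Z})\oplus_1\mathbb{C}$ contains many flat analytic discs in its unit sphere. I would then take for both $A$ and $B$ weighted shifts on $\Yfont$, chosen exactly as the operators in the proofs of Theorems~\ref{exampA} and \ref{exampB} respectively (growing weights $\delta^{-|k|}$ for $A$, to force $A^{-1}$ compact; a single exceptional weight $1/M$ for $B$), and compute the resolvent norms on the dual side.

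The key reduction is Theorem~\ref{dualresolvthm}: $\|(A-\lambda I)^{-1}\|_\Yfont=\|(A^\ast-\lambda I)^{-1}\|_{\Yfont^\ast}$, and likewise for $B$. On $\Yfont^\ast=\ell_\infty(\mathbb{Z})\oplus_1\mathbb{C}$ the adjoint operators are again weighted shifts, and I would run the estimates of Parts~3--5 of Theorem~\ref{exampA} (resp. Parts~2--4 of Theorem~\ref{exampB}) essentially verbatim, with the flat space $\ell_\infty$ playing the role that the block $\mathcal{H}\oplus_\infty\mathbb{C}$ played there. The only inequality that must be re-examined is the convolution bound, where $\|\nu\ast x\|_\infty\le\|\nu\|_1\|x\|_\infty$ replaces its $\ell_2$ counterpart. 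Crucially, the outer $\oplus_1$ summand of $\Yfont^\ast$ survives, and it is exactly this $\ell_1$ trade-off between the two scalar coordinates that produced the sharp bound $|y_1|+|y_0|\le 1$ before; the compactness of $A^{-1}$ is immediate on $\Yfont$ itself, the weighted shift with weights tending to $0$ being a norm limit of finite-rank operators.

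It is worth recording why this dual detour seems forced and where the real work lies. Writing the resolvent in block form over $\Yfont=Y\oplus_\infty\mathbb{C}$, its norm equals $\max\{P(\lambda),Q(\lambda)\}$, where $P(\lambda)$ and $Q(\lambda)$ are the norms of the $Y$-valued and scalar-valued rows; both are subharmonic and $\le M$. For the maximum to stay equal to $M$ on an open set one cannot rely on the $Y$-valued row, whose norm is governed by the complex uniform convexity of $Y$ as in Globevnik's theorem; the constancy must instead come from the scalar row $\phi_\lambda=(\psi_\lambda,d_\lambda)\in Y^\ast\oplus_1\mathbb{C}$, which can have constant norm while being non-constant only because $Y^\ast=\ell_\infty$ fails complex strict convexity. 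This is also why the example escapes Theorem~\ref{strictlyA}: neither $\Yfont$ nor $\Yfont^\ast$ is complex strictly convex. The main obstacle is thus the sharp upper bound $\|(A-\lambda I)^{-1}\|\le M$ and its analogue for $B$: a direct estimate in the primal $\oplus_\infty$ norm overshoots, since the scalar coordinate of the resolvent accumulates a geometric series of norm exceeding $M$, and only the $\oplus_1$ trade-off available on $\Yfont^\ast$ closes the gap. The matching lower bounds are obtained, as in Theorems~\ref{exampA} and \ref{exampB}, by evaluating the resolvent on a single coordinate functional whose image remains pinned to a flat face of the $\ell_\infty$-ball.
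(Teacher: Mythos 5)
Your framework --- $Y=\ell_1(\mathbb{Z})$, passing to the dual $\Yfont^*=\ell_\infty\oplus_1\mathbb{C}$ via Theorem~\ref{dualresolvthm}, and running the estimates of \cite{S2} there --- is the right one, and is essentially the paper's (which constructs the operators on the predual $c_0(\mathbb{Z})\oplus_1\mathbb{C}$ and takes $A=A_1^*$, $B=B_1^*$). But your concrete choice of operators is wrong, and the step you pass over as ``essentially verbatim'' is exactly where it fails. Taking adjoints \emph{reverses the direction} of a weighted shift, and the constructions of Theorems~\ref{exampA} and \ref{exampB} are not direction-symmetric: they work only when the unique large weight carries the $\oplus_1$-distinguished coordinate \emph{into} the sup-normed block (in Theorem~\ref{exampA}, $y_1=\beta_1x_0+\cdots$ with $\beta_1=1$, while $y_0$ receives only weights $\le\delta$). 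If you place the formula of Theorem~\ref{exampA} on $\Yfont$ itself, i.e.\ on sequences with norm $\max\bigl\{\sum_{k\neq 0}|x_k|,\,|x_0|\bigr\}$, then the adjoint acting on $\Yfont^*$ (norm $\sup_{k\neq 0}|g_k|+|g_0|$) is the shift of opposite orientation, whose weight-one transition feeds the sup-block coordinate $g_1$ into the $\oplus_1$-coordinate $g_0$; the key estimate $\sup_{k\neq0}|y_k|+|y_0|\le 1$ then collapses (already $\|(A^*-\lambda I)^{-1}e_1\|\ge 1+\delta|\lambda|$).

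This is not an artifact of a lossy bound: your operators genuinely fail to have constant resolvent norm. With $(A^{-1}x)_k=\delta^{|k-1|}x_{k-1}$ on $\Yfont$ one computes $\|A^{-1}\|=1+\delta$ exactly, whereas $(A-\lambda I)^{-1}(e_0+e_1)=e_1+\delta(1+\lambda)e_2+\cdots$ gives $\|(A-\lambda I)^{-1}\|\ge 1+\delta(1+\lambda)$ for real $\lambda\in(0,\delta]$, so the norm strictly increases along the positive real axis; similarly $\|B^{-1}\|=M+1$ while $\|(B-\lambda I)^{-1}\|\ge M+\frac{1+M\lambda}{1-\lambda}>M+1$ for real $\lambda\in\bigl(0,\tfrac1M\bigr)$. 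This also exposes the misconception in your final paragraph: since $\|T\|=\|T^*\|$, the primal and dual computations evaluate the \emph{same} number, so a ``primal overshoot'' can never be ``closed by the $\oplus_1$ trade-off on the dual'' --- if the primal norm exceeds the claimed constant, so does the dual norm. The repair is precisely the paper's route: define the shifts by the formulas of Theorems~\ref{exampA} and \ref{exampB} on $c_0(\mathbb{Z})\oplus_1\mathbb{C}$, where those proofs do carry over verbatim, and let $A$, $B$ be their adjoints on $\Yfont=\ell_1(\mathbb{Z})\oplus_\infty\mathbb{C}$ (equivalently, take your shifts with the orientation reversed); Theorem~\ref{dualresolvthm} and Schauder's theorem then transfer the constancy of the resolvent norm and the compactness, and one checks directly (the range of the adjoint resolvent contains all finitely supported sequences) that the adjoint operator is densely defined despite the non-reflexivity of $c_0(\mathbb{Z})\oplus_1\mathbb{C}$.
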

\begin{proof}
Let $c_0(\mathbb{Z})$ denote as usual the subspace of $l_\infty(\mathbb{Z})$ consisting of all elements
$(x_k)_{k \in \mathbb{Z}}$ such that $\lim_{|k| \to \infty} x_k = 0$, and let $B_1$ be the operator obtained from the one
in \cite[Theorem 3.1]{S2} if one replaces $l_\infty(\mathbb{Z})$ by $Y_1:=c_0(\mathbb{Z})$. The proof of that theorem carries over to
$B_1$ without change. Therefore the resolvent norm of $B_1 : Y_1\oplus_1\mathbb{C} \to
Y_1\oplus_1\mathbb{C}$ is constant in a neighbourhood of $0$. The dual space $Y:= l_1(\mathbb{Z})$ of
$c_0(\mathbb{Z})$ is complex uniformly convex (see \cite{G1}), and the resolvent norm of the adjoint operator
$B := B_1^* : Y\oplus_\infty \mathbb{C} \to Y\oplus_\infty \mathbb{C}$ is  constant in a neighbourhood of $0$.

Similarly, one can define an operator $A_1$ by the same formula as in the proof of Theorem~\ref{exampA}, but
replacing $l_2(\mathbb{Z})$ by $Y_1:=c_0(\mathbb{Z})$ there. Then an argument similar to, but easier than, the proof of that theorem shows that $A_1 : Y_1\oplus_1\mathbb{C} \to
Y_1\oplus_1\mathbb{C}$ is a closed densely defined operator with a compact resolvent
whose norm is constant in a neighbourhood of $0$. It is follows that $A := A_1^* : Y\oplus_\infty \mathbb{C} \to Y\oplus_\infty \mathbb{C}$
has the desired properties.
\end{proof}

Our next task is to show that one cannot take $Y$ to be a Hilbert space in the above theorem: the resolvent norm of
a bounded linear operator or of a closed densely defined operator with a compact resolvent on
$l_2\oplus_p \mathbb{C}$, $1 \le p \le \infty$ cannot be constant on an open set.
In fact, we prove a more general result which uses absolute norms on $\mathbb{C}^2$. Readers not familiar with such norms and with the definitions of $\Psi$ and $\oplus_\psi$ where $\psi\in\Psi$ should refer to Appendix~A.

\begin{theorem}\label{co1b}
Let $\Omega$ be an open subset of $\mathbb{C}$
and let $Y$ be a complex strictly convex Banach space with a complex strictly convex dual $Y^*$.
Given $\psi \in \Psi$, let $\|\cdot\|$ be the ($\psi$-dependent) norm on the Banach space
$\Yfont := Y\oplus_\psi \mathbb{C}$. Suppose
$A : \Yfont \to \Yfont$ is a closed densely defined operator with a compact resolvent $(A - \lambda I)^{-1}$
defined for all $\lambda \in \Omega$. If $\|(A - \lambda I)^{-1}\| \le M$
for all $\lambda \in \Omega$, then $\|(A - \lambda I)^{-1}\| < M$ for all $\lambda \in \Omega$.
\end{theorem}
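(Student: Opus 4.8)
The plan is to follow the reduction in the proof of Theorem~\ref{strictlyA} and then exploit the one-dimensionality of the $\mathbb{C}$ summand together with \emph{both} convexity hypotheses. As there, I would first assume $\Omega$ connected, suppose $\|R(\lambda_0)\|=M$ for some $\lambda_0\in\Omega$ (writing $R(\lambda)=(A-\lambda I)^{-1}$), invoke the maximum principle to get $\|R(\lambda)\|\equiv M$ on $\Omega$, and shift so that $0\in\Omega$. Choosing a maximising sequence $u_n$ with $\|u_n\|=1/M$ and $\|R(0)u_n\|\to1$ and using compactness of $R(0)$, pass to a subsequence with $R(0)u_n\to x$, $\|x\|=1$. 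Since $R(\lambda)=(I-\lambda R(0))^{-1}R(0)$ and $\|R(0)\|=M$, for $\lambda$ in a small disc $D\subset\Omega$ about $0$ one has $R(\lambda)u_n\to F(\lambda):=(I-\lambda R(0))^{-1}x$ and $\|F(\lambda)\|=\lim_n\|R(\lambda)u_n\|\le M\cdot\tfrac1M=1$; as $\|F\|$ is subharmonic with an interior maximum at $0$, the maximum principle upgrades this to $\|F(\lambda)\|\equiv1$ on $D$. Here $F(0)=x$ and $F'(0)=R(0)x\neq0$ by injectivity of $R(0)$, so $F$ is non-constant. Write $F(\lambda)=(G(\lambda),g(\lambda))\in Y\oplus_\psi\mathbb{C}$ and $x=(\xi,a)$.

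The generic mechanism is a harmonicity argument. Fix a norming functional $f=(\phi,c)\in\Yfont^*$ of $x$. Since $\mathrm{Re}\,f(F(\lambda))\le\|F(\lambda)\|\equiv1=f(x)$ and $\mathrm{Re}\,f\circ F$ is harmonic with an interior maximum, $f(F(\lambda))\equiv1$. Writing $s(\lambda)=\|G(\lambda)\|_Y$ and $t(\lambda)=|g(\lambda)|$, which are subharmonic, the Hölder inequality $su+tv\le N(s,t)N^*(u,v)$ for the absolute-norm pair forces, when $\phi\neq0\neq c$, the affine relation $\|\phi\|s(\lambda)+|c|t(\lambda)=1$. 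Then $s=(1-|c|t)/\|\phi\|$ is simultaneously sub- and superharmonic, hence harmonic, whence $t=|g|$ is harmonic and $g$ is constant; $s$ is then a nonnegative constant, so by complex strict convexity of $Y$ (equivalently, the unit sphere of $Y$ contains no non-constant analytic disc) $G$ is constant. Thus $F$ is constant, contradicting $F'(0)\neq0$. Since the norming functionals form a convex set, a short argument shows that the only way to avoid producing such an $f$ is that every norming functional of $x$ has $c=0$, or every one has $\phi=0$.

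These two degenerate cases, where $\Yfont$ genuinely fails to be complex strictly convex, are the main obstacle. If every norming functional has $c=0$, then taking $f=(\phi,0)$ with $\|\phi\|=1$ gives $\|\xi\|_Y=1$ and $\phi(G(\lambda))\equiv1$, whence $\|G(\lambda)\|\equiv1$ and, by complex strict convexity of $Y$, $G\equiv\xi$. Comparing Taylor coefficients, $(R(0)^k x)_Y=0$ for all $k\ge1$; this forces $R(0)x$ to be a nonzero scalar multiple of $e:=(0,1)$ and then $R(0)e=\mu e$, i.e.\ $e$ is an eigenvector of $R(0)$. Consequently $x=A R(0)x$ is a scalar multiple of $e$, so $\xi=0$, contradicting $\|\xi\|=1$. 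This is the heart of the co-dimension-one phenomenon: constancy of the $Y$-component along the orbit, produced by the flat face of the norm, collapses the one-dimensional summand to an eigenline.

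The complementary case, every norming functional having $\phi=0$, is handled by duality, and is where complex strict convexity of $Y^*$ enters. Here $|a|=1$ and $f(F(\lambda))=c\,g(\lambda)\equiv1$ forces $g\equiv a$, so the orbit has constant scalar part. Passing to $A^*$ on $\Yfont^*=Y^*\oplus_{\psi^*}\mathbb{C}$ via Theorem~\ref{dualresolvthm} (so $R^*(\lambda)=R(\lambda)^*$ is compact with $\|R^*(\lambda)\|=M$), one checks that $e^*:=(0,\bar a)$ satisfies $\|R^*(\lambda)e^*\|=M$, i.e.\ $e^*$ is a maximiser of $R^*(0)$; this yields a constant-norm dual orbit $\hat F(\lambda)=(I-\lambda R^*(0))^{-1}R^*(0)e^*/M$ in $\Yfont^*$ of exactly the same type as $F$. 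Running the trichotomy again for $\hat F$, the generic and first degenerate cases are now settled by complex strict convexity of $Y^*$, while the remaining degenerate case for $\hat F$ would require $N^*$ to have a flat face in the $Y^*$-direction; but an elementary duality of absolute norms shows this is incompatible with the flat face of $N$ in the $Y$-direction that the present case produces. I expect this interplay to be the delicate point: one cannot contradict complex strict convexity of $\Yfont$ directly, and must instead turn the constancy of a single coordinate into eigenvector rigidity and play the hypotheses on $Y$ and $Y^*$ off against each other through Theorem~\ref{dualresolvthm}. A contradiction is thereby reached in every case, giving $\|R(\lambda)\|<M$ throughout $\Omega$.
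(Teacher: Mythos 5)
Your strategy (the constant-norm analytic disc $F(\lambda)=(I-\lambda R(0))^{-1}x$, norming functionals, the harmonicity/H\"older mechanism, eigenvector rigidity) is genuinely different from the paper's proof, which instead runs a case analysis on $\psi$ and, in the hard case, renorms $\Yfont$ by a modified $\psi_1$ satisfying \eqref{cc} before contradicting the definition of complex strict convexity directly; note also that your argument leans on the Thorp--Whitley theorem (complex strict convexity implies the strong maximum modulus principle), an external result the paper never needs. Your generic mechanism and your first degenerate case check out. The gap is in the final, doubly degenerate case, and it is a real one.

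Concretely: in your second degenerate case (every norming functional of $x=(\xi,a)$ has $\phi=0$) it is \emph{not} true that this ``produces a flat face of $N$ in the $Y$-direction''. The conclusion that $\psi(t)=t$ near $t=1$ comes from $N(\|\xi\|,1)=\|x\|=1$, and it only follows when $\xi\neq 0$. The sub-case $\xi=0$, i.e.\ $x$ a unimodular multiple of $e=(0,1)$, cannot be excluded, and then no flat face of $N$ is produced; all this sub-case requires is $\psi^*(t)>t$ on $[1/2,1)$. Symmetrically, the remaining degenerate case for $\hat F$ on $\Yfont^*$ need not force $\psi^*$ to be flat near $1$: it can occur with $\hat\xi:=P_0\hat F(0)=0$, which merely requires $\psi(t)>t$ on $[1/2,1)$. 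In the combination $\xi=0$ and $\hat\xi=0$ there may be no flat faces at all, the two requirements $\psi(t)>t$ and $\psi^*(t)>t$ on $[1/2,1)$ being mutually consistent (both hold for $\ell_p$-type $\psi$), so the ``elementary duality of absolute norms'' you invoke has nothing to act on. Worse, this configuration sits exactly at the heart of the theorem: when $\psi$ is flat near $0$ but not near $1$ (Part 3 of the paper's proof), your second degenerate case \emph{forces} $\xi=0$, so your concluding argument is guaranteed to be vacuous precisely where the theorem is hardest. The hole can be repaired: in that combination, $g\equiv a$ gives $P_1R(0)^k e=0$ for all $k\ge 1$, in particular $P_1R(0)e=0$, while $\hat\xi=0$ says $R^*(0)\hat e=\hat\mu\,\hat e$ with $|\hat\mu|=M$, where $\hat e=(0,1)\in\Yfont^*$; evaluating this functional identity at $e$ yields
\begin{equation*}
\hat\mu=\bigl(R^*(0)\hat e\bigr)(e)=\hat e\bigl(R(0)e\bigr)=P_1R(0)e=0,
\end{equation*}
a contradiction. (Alternatively: $\xi=0$ forces $\psi^*(t)>t$ on $[1/2,1)$ and $\hat\xi=0$ forces $\psi(t)>t$ on $[1/2,1)$; by the convexity facts in Appendix~A one of $\psi,\psi^*$ then satisfies \eqref{cc}, so one of $\Yfont,\Yfont^*$ is complex strictly convex and the corresponding disc dies at once.) But as written, your final case is unproved, and the reason you give for it is incorrect.
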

\begin{proof}
One can assume as in the proof of Theorem \ref{strictlyA} that $\Omega$ is connected.

\emph{Part 1.}  If $\psi$ satisfies \eqref{cc}, then $\Yfont$ is complex strictly convex and our claim follows
from Theorem~\ref{strictlyA}.

\emph{Part 2.}  Suppose there exist $t_0 \in (0, 1/2]$ and $t_1 \in [1/2, 1)$ such that $\psi(t_0) = 1 - t_0$ and $\psi(t_1) = t_1$.
Then $\psi^*$ satisfies \eqref{cc} (see \eqref{psidualeq}--\eqref{t}),  and $Y^*\oplus_{\psi^*} \mathbb{C}$ is complex strictly convex.
Since $\Yfont^* = Y^*\oplus_{\psi^*} \mathbb{C}$ (see \cite{LF}), our claim again follows from Theorem~\ref{strictlyA}.

\emph{Part 3.} Suppose $\psi(t) > t$ for all $t \in [1/2, 1)$ and $\psi(t) = 1 - t$ for sufficiently small $t > 0$. Then there exists
$t_0 \in (0, 1/2)$ such that $\psi(t) = 1 - t$ for all $t \in [0, t_0]$ and $\psi(t) > 1 - t$ for all $t \in (t_0, 1/2]$.

Suppose there exists
$\lambda_0 \in \Omega$ such that $\|R(\lambda_0)\| = M$, where
$R(\lambda) := (A - \lambda I)^{-1}$. Then, exactly as in the proof of Theorem~\ref{strictlyA}, one can assume that
$0 \in \Omega$ and derive from \cite[Lemma 1.1]{GV} the existence of  $r > 0$ such that
$\|R(0) +  \lambda R'(0)\| \le M$ and $\|R(0) + \lambda R''(0)\| \le M$
when $|\lambda| \le r$. Hence
\begin{equation}\label{res23b}
\|R(0) + \lambda R^2(0)\| \le M , \ \ \|R(0) + \lambda R^3(0)\| \le M , \ \ \ |\lambda| \le r .
\end{equation}

Suppose $M_0 : = \|P_0R(0)\| < M$ (see \eqref{P01}). By continuity, there exists $\delta_2 > 0$ such that
\begin{equation}\label{delta1A}
\psi(t) \ge \frac{M}{M_0} (1 - t) \ \ \Longrightarrow \ \ t \ge t_0 + \delta_2 .
\end{equation}
There clearly exists $\psi_1 \in \Psi$ that satisfies \eqref{cc} and the following condition
$$
 \psi_1(t) = \psi(t) ,  \ \ \ \forall t \in \left[t_0 + \frac{\delta_2}2\, , 1\right]  .
$$
Let $\|\cdot\|'$ denote the norm on $\Yfont_1 := Y\oplus_{\psi_1} \mathbb{C}$. Note that $\Yfont$ and $\Yfont_1$
coincide as vector spaces but are equipped with different norms $\|\cdot\|$ and $\|\cdot\|'$, which are
equivalent to each other (see \eqref{absnorms}).

Since $\|R(0)\| = M$, there exist $u_n \in \Yfont$, $n \in \mathbb{N}$
such that $\|u_n\| = \frac1M$ and $\|R(0)u_n\| \to 1$ as $n \to
\infty$. Since $R(0)$ is compact, one can assume, after going to a
subsequence, that $R(0)u_n$ converges to a vector $x \in \Yfont$ and
$\|x\| = 1$.

Denoting for brevity $z = \|P_0 x\|$,
$v = |P_1 x|$, we get
$$
z = \lim_{n \to \infty} \|P_0 R(0)u_n\| \le \|P_0R(0)\| \|u_n\| =   M_0/M
$$
and
\begin{eqnarray*}
&& (z + v)\, \psi\left(\frac{v}{z + v}\right) = \|x\| = 1 \ \ \Longrightarrow \\
&& \psi\left(\frac{v}{z + v}\right) = \frac{1}{z + v} = \frac{1}z \left(1 - \frac{v}{z + v}\right)  \ge
 \frac{M}{M_0} \left(1 - \frac{v}{z + v}\right)  \ \ \Longrightarrow \\
&&  \frac{v}{z + v} \ge t_0 + \delta_2  \ \ \Longrightarrow \ \ \psi_1\left(\frac{v}{z + v}\right) = \psi\left(\frac{v}{z + v}\right) \ \
\Longrightarrow \\
&&  \|x\|' = \|x\| = 1
\end{eqnarray*}
(see \eqref{normpsi} and \eqref{delta1A}).
Also, by continuity, there exists $r_0 \in (0, r]$ such that
\begin{eqnarray*}
\frac{|P_1 x + \lambda P_1 R(0)x|}{\|P_0 x + \lambda P_0 R(0)x\| + |P_1 x + \lambda P_1 R(0)x|}
\ge \frac{v}{z + v} - \frac{\delta_2}2 \\
\ge t_0 + \frac{\delta_2}2, \ \ \ |\lambda| \le r_0 ,
\end{eqnarray*}
and hence $\|x + \lambda R(0)x\|' = \|x + \lambda R(0)x\|$ when $ |\lambda| \le r_0$.

Since $\psi_1$ satisfies \eqref{cc}, $\Yfont_1$ is complex strictly convex. Let $y := r_0 R(0)x$. Then  $y \not = 0$.
On the other hand, the first inequality in \eqref{res23b} implies
\begin{eqnarray*}
&& \|x + \zeta y\|' = \|x + \zeta r_0 R(0)x\|' = \|x + \zeta r_0 R(0)x\| \\
&& = \lim_{n \to \infty} \|R(0)u_n + \zeta r_0 R^2(0)u_n\|
\le \|R(0) + \zeta r_0 R^2(0)\|\, \|u_n\| \\
&& \le M\, \frac1M = 1 , \ \ \ |\zeta| \le 1 .
\end{eqnarray*}
Since $\|x\|' = \|x\| =  1$, we get a contradiction with the complex strict convexity of $\Yfont_1$.
Hence $\|P_0R(0)\| < M$ cannot hold.

\emph{Part 4.} Since $\|P_0R(0)\| = M$, we can prove as in Part 3  that there exist $u_n \in \Yfont$, $n \in \mathbb{N}$
such that $\|u_n\| = \frac1M$ and $R(0)u_n$ converges to a vector $x \in \Yfont$ with
$\|P_0 x\| = 1$. Suppose $P_0R(0) x = 0$ and $P_0R^2(0) x = 0$. Then
$R(0) x, R^2(0) x \in \mathbb{C}$ and there exist $\mu, \eta \in \mathbb{C}$ such that
$|\mu| + |\eta| = 1$ and $\mu R(0) x + \eta R^2(0) x = 0$. Further,
\begin{eqnarray*}
&& R(0) (\mu x + \eta R(0) x) = 0 \ \ \Longrightarrow \ \ \mu x + \eta R(0) x = 0  \ \ \Longrightarrow \\
&& \mu P_0 x = 0  \ \ \Longrightarrow \ \ \mu = 0 \ \ \Longrightarrow \ \ R^2(0) x = 0 \ \ \Longrightarrow \ \
x = 0.
\end{eqnarray*}
This contradiction shows that at least one of $P_0R(0) x$ and $P_0R^2(0) x$ is nonzero.

\emph{Part 5.} Suppose $P_0R^2(0) x \not= 0$ and let $y_0 := r P_0R^2(0) x$. Then the second inequality in
\eqref{res23b} implies
\begin{eqnarray*}
&& \|P_0 x + \zeta y_0\| = \lim_{n \to \infty} \|P_0 R(0)u_n + \zeta r P_0 R^3(0)u_n\| \\
&& \le \|R(0) + \zeta r_0  R^3(0)\|\, \|u_n\|
 \le M\, \frac1M = 1 , \ \ \ |\zeta| \le 1 .
\end{eqnarray*}
The complex strict convexity of $Y$ implies that $y_0 = 0$.  This contradiction shows that $P_0R^2(0) x \not= 0$
 cannot hold.

\emph{Part 6.} Similarly, one shows that $P_0R(0) x \not= 0$ cannot hold either. Since this exhausts our list of
possibilities, we conclude that there cannot exist $\lambda_0 \in \Omega$ such that $\|R(\lambda_0)\| = M$. This proves our claim
in the case of $\psi$ satisfying the conditions stated at the beginning of Part 3 above.

\emph{Part 7.}  Finally, suppose $\psi(t) > 1 - t$ for all $t \in (0, 1/2]$ and $\psi(t) = t$ for $t$ sufficiently close to $1$. Then
$\psi^*(t) > t$ for all $t \in [1/2, 1)$ (see \eqref{t}). Hence $\psi^*$ satisfies either the conditions in Part 1 or those in Part 3, and
our claim follows by duality from what has already been proved (cf. Part 2 above and Part 2 of the proof of Theorem \ref{strictlyA}).

\end{proof}

\begin{theorem}\label{co1a}
Let $\Omega$ be an open subset of $\mathbb{C}$, $\psi \in \Psi$,
$Y$ be a complex uniformly convex Banach space with a complex uniformly convex dual $Y^*$, and
let $\Yfont := Y\oplus_\psi \mathbb{C}$. Suppose
$B$ is the infinitesimal generator of a $C_0$ semigroup on $\Yfont$
and $B - \lambda I$ is invertible for all
$\lambda \in \Omega$. If $\|(B - \lambda I)^{-1}\| \le M$
for all $\lambda \in \Omega$, then $\|(B - \lambda I)^{-1}\| < M$ for all $\lambda \in \Omega$.
\end{theorem}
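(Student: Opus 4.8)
The plan is to follow the architecture of the proof of Theorem~\ref{co1b}, performing the same case analysis on the gluing function $\psi$, but systematically replacing ``compact resolvent together with complex strict convexity'' by ``$C_0$ semigroup generator together with complex uniform convexity''. Writing $R(\lambda):=(B-\lambda I)^{-1}$, one first reduces to the case of $\Omega$ connected, supposes for contradiction that $\|R(\lambda_0)\|=M$ for some $\lambda_0\in\Omega$, uses the maximum principle to propagate this to $\|R(\lambda)\|=M$ on all of $\Omega$, shifts so that $0\in\Omega$, and invokes \cite[Lemma~1.1]{GV} to obtain the two bounds \eqref{res23b}, namely $\|R(0)+\mu R^2(0)\|\le M$ and $\|R(0)+\mu R^3(0)\|\le M$ for $|\mu|\le r$.

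In the ``nice'' cases (the analogues of Parts~1,~2 and~7 of Theorem~\ref{co1b}) the semigroup structure makes the argument shorter than in the compact resolvent case. When $\psi$ satisfies \eqref{cc}, the space $\Yfont$ is complex uniformly convex because $Y$ is, and one simply invokes the known semigroup result \cite{S3}. When instead $\psi^*$ satisfies \eqref{cc}, the dual $\Yfont^*=Y^*\oplus_{\psi^*}\mathbb{C}$ is complex uniformly convex; here I would pass to $B^*$ through Theorem~\ref{dualresolvthm}, which gives $\Spec(B^*)=\Spec(B)$ and $\|(B^*-\lambda I)^{-1}\|=\|(B-\lambda I)^{-1}\|$, and then apply the B\"ogli--Siegl theorem \cite{BS} exactly as indicated in the introduction: were the resolvent norm of $B^*$ constant on an open set, it would have to be the global minimum of the resolvent norm; but the semigroup resolvent satisfies $\|(B-\lambda I)^{-1}\|\to 0$ as $\Re\lambda\to+\infty$, so its infimum over the resolvent set of $B^*$ is $0$ and is not attained, contradicting $M>0$.

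The substantive work is the analogue of Parts~3--6, where neither $\Yfont$ nor $\Yfont^*$ is strictly convex. Here I would keep the projections $P_0,P_1$ onto $Y$ and $\mathbb{C}$ (each of norm $\le 1$ for an absolute norm) and the auxiliary function $\psi_1\in\Psi$ that satisfies \eqref{cc} and agrees with $\psi$ on the relevant subinterval, so that $\Yfont_1:=Y\oplus_{\psi_1}\mathbb{C}$ is complex uniformly convex. The goal, as in Theorem~\ref{co1b}, is to force the mass of the maximizers into the $Y$-component (ruling out $\|P_0R(0)\|<M$), and then to deduce from the two bounds \eqref{res23b} that the $Y$-components $P_0R(0)$ and $P_0R^2(0)$ of the maximizers must both degenerate, contradicting the elementary dichotomy that they cannot both vanish.

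The hard part will be the loss of compactness. In Theorem~\ref{co1b} the compactness of $R(0)$ produced an exact norm-attaining vector $x$ with $\|x\|=1$, which was then fed into the strict-convexity arguments; a semigroup generator need not have a compact resolvent, so no such $x$ is available. The remedy --- and the reason the hypotheses are strengthened from strict to uniform convexity --- is to run the entire argument with the approximate maximizing sequence $(R(0)u_n)$, where $\|u_n\|=1/M$ and $\|R(0)u_n\|\to 1$, and to use the quantitative force of complex uniform convexity (a bounded analytic perturbation whose value has norm tending to $1$ forces the perturbing term to tend to $0$) in place of the qualitative strict-convexity conclusion. The delicate points are to verify, uniformly in $n$, that the bounds on $R^2(0)u_n$ and $R^3(0)u_n$ hold, that $P_1R(0)u_n$ concentrates in the region where $\psi=\psi_1$ so that the $\|\cdot\|$- and $\Yfont_1$-norms agree asymptotically on these vectors, and hence that the limiting contradiction with the complex uniform convexity of $Y$ and of $\Yfont_1$ survives the passage to the limit.
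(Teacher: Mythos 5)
Your high-level architecture does match the paper's proof: the same case analysis on $\psi$, the projections $P_0,P_1$, the auxiliary norm $\psi_1$ satisfying \eqref{cc}, and approximate maximizers in place of the exact maximizer that compactness supplied in Theorem~\ref{co1b}. Your treatment of the easy cases is also sound: Part~1 is the paper's Part~1, and your B\"ogli--Siegl/Hille--Yosida route for the dual case is essentially the alternative the paper itself endorses in Remark~\ref{semirem} (the paper instead uses \cite{S3} when $\Yfont$ is reflexive and Theorem~\ref{Rota*} otherwise). The genuine gap is in the core Parts 3--6: you never say what the final contradiction actually is, and, tellingly, your plan for those parts never uses the hypothesis that $B$ generates a $C_0$ semigroup. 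Complex uniform convexity applied to the bounds \eqref{res23} yields only \emph{smallness}: for an approximate maximizer $u$ with $\|u\|=1/M$ and $\|R(0)u\|>1-\delta$ it shows that $\|R^2(0)u\|$ (respectively $\|P_0R^3(0)u\|$, hence via \eqref{th1} also $\|R^3(0)u\|$) can be made arbitrarily small. In Theorem~\ref{co1b} such smallness was contradictory because the maximizer was a \emph{fixed} nonzero vector $x$ and $R(0)$ is injective, so $R(0)x$, and then one of $P_0R(0)x$, $P_0R^2(0)x$, could not vanish; with approximate maximizers there is no limit vector, injectivity gives no quantitative lower bound (for unbounded $B$ the resolvent $R(0)$ is not bounded below), and smallness of $R^2(0)u$ by itself contradicts nothing. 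Nor can any argument using only analyticity, resolvent bounds and convexity succeed: the operator of \cite[Theorem 3.1]{S2} is closed and densely defined on a Hilbert space --- which is complex uniformly convex, has complex uniformly convex dual, and is of the form $H\oplus_\psi\mathbb{C}$ with $\psi$ satisfying \eqref{cc} --- yet its resolvent norm is constant near $0$. So the semigroup structure must enter the contradiction in an essential way, and your proposal does not say where.

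The missing ingredient is the paper's Appendix~B: the Kolmogorov--Kallman--Rota inequalities for semigroup generators, namely \eqref{21}, $\|Bw\|^2\le C\left(\|B^2w\|+\|w\|\right)\|w\|$, its higher-order version \eqref{Dkn} of Theorem~\ref{Rota}, and the adjoint version \eqref{Dkn*} of Theorem~\ref{Rota*} needed in your Part~7. These convert the smallness supplied by uniform convexity into a contradiction: taking $w=R^2(0)u$ in \eqref{21}, with $\|Bw\|=\|R(0)u\|>1-\delta\ge\tfrac12$ and $\|B^2w\|=\|u\|=\tfrac1M$, gives $\tfrac14\le C\left(\tfrac1M+\tau\right)\tau$, false for small $\tau$; similarly $w=R^3(0)u$ in \eqref{Dkn} with $k=2$, $n=3$ gives $\tfrac18\le L_{3,2}\left(\tfrac1M+\tfrac{\tau}{\rho_0}\right)^2\tfrac{\tau}{\rho_0}$ and handles the other branch. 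The same inequality is also what replaces your ``elementary dichotomy'': the claim that $P_0R(0)x$ and $P_0R^2(0)x$ cannot both vanish is an exact-injectivity argument, and its approximate analogue (the paper's Part~4, phrased with the ratio $\theta$ of \eqref{theta} and a threshold $\rho_0$) again runs entirely through \eqref{21}. Without these semigroup-specific estimates --- or some equivalent quantitative lower bound tying $\|R(0)u\|$, $\|u\|$ and $\|R^2(0)u\|$ together --- your plan cannot be completed.
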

\begin{proof}
The proof follows the same lines as  that of Theorem \ref{co1b} but it is somewhat more technical.
We can assume as above that
$\Omega$ is connected.

\emph{Part 1.}  If $\psi$ satisfies \eqref{cc}, then $\Yfont$ is complex uniformly convex and our claim follows
from \cite{S3}.

\emph{Part 2.}  Suppose there exist $t_0 \in (0, 1/2]$ and $t_1 \in [1/2, 1)$ such that $\psi(t_0) = 1 - t_0$ and $\psi(t_1) = t_1$.
Then $\psi^*$ satisfies \eqref{cc}(see \eqref{psidualeq}--\eqref{t}), and $\Yfont^* = Y^*\oplus_{\psi^*} \mathbb{C}$ (see \cite{LF})
is complex uniformly convex. If $\Yfont$ is reflexive, $B^*$ is the infinitesimal generator of a $C_0$ semigroup on $\Yfont^*$
(see, e.g., \cite[Corollary 3.3.9]{{ABHN}}), and our claim follows by duality from the main result in \cite{S3} applied to $B^*$.
If $\Yfont$ is not reflexive, one can use the result in Remark~\ref{semirem} instead of the latter (see also Theorem
\ref{dualresolvthm}).

\emph{Part 3.} Suppose $\psi(t) > t$ for all $t \in [1/2, 1)$ and $\psi(t) = 1 - t$ for sufficiently small $t > 0$. Then there exists
$t_0 \in (0, 1/2)$ such that $\psi(t) = 1 - t$ for all $t \in [0, t_0]$ and $\psi(t) > 1 - t$ for all $t \in (t_0, 1/2]$.

Suppose there exists
$\lambda_0 \in \Omega$ such that $\|R(\lambda_0)\| = M$, where
$R(\lambda) := (B - \lambda I)^{-1}$. Then, exactly as in Part 3 of the proof of Theorem \ref{co1b}, one
arrives at the same estimates as in \eqref{res23b}:
\begin{equation}\label{res23}
\|R(0) + \lambda R^2(0)\| \le M , \ \ \|R(0) + \lambda R^3(0)\| \le M , \ \ \ |\lambda| \le r .
\end{equation}

Suppose $M_0 : = \|P_0R(0)\| < M$. Take $\delta_1 > 0$ such that
$$
\varrho :=  \frac{M(1 - \delta_1)}{M_0} > 1 .
$$
By continuity, there exists $\delta_2 > 0$ such that
\begin{equation}\label{delta1B}
\psi(t) \ge \varrho (1 - t) \ \ \Longrightarrow \ \ t \ge t_0 + \delta_2 .
\end{equation}
There exists $\psi_1 \in \Psi$ that satisfies \eqref{cc} and the condition
$$
 \psi_1(t) = \psi(t) ,  \ \ \ \forall t \in \left[t_0 + \frac{\delta_2}2\, , 1\right]  .
$$
Let $\|\cdot\|'$ denote the norm on $\Yfont_1 := Y\oplus_{\psi_1} \mathbb{C}$.

For any  $\delta \in (0, \delta_1]$, there exists $u \in \Yfont$ such that
$\|u\| = 1/M$ and $\|R(0)u\| > 1 - \delta$. Denoting for brevity $z = \|P_0 R(0) u\|$,
$v = |P_1 R(0) u|$, we get $z \le M_0/M$ and
\begin{eqnarray*}
&& (z + v)\, \psi\left(\frac{v}{z + v}\right) > 1 - \delta \ \ \Longrightarrow \\
&& \psi\left(\frac{v}{z + v}\right) > \frac{1 - \delta}{z + v} = \frac{1 - \delta}z \left(1 - \frac{v}{z + v}\right) \\
&& \ge \frac{M (1- \delta)}{M_0} \left(1 - \frac{v}{z + v}\right) \ge \varrho \left(1 - \frac{v}{z + v}\right) \ \ \Longrightarrow \\
&& \frac{v}{z + v} \ge t_0 + \delta_2  \ \ \Longrightarrow \ \ \psi\left(\frac{v}{z + v}\right) = \psi_1\left(\frac{v}{z + v}\right) \\
&& \Longrightarrow \ \ \|R(0)u\| = \|R(0)u\|' .
\end{eqnarray*}
Also, by continuity, there exists $r_0 \in (0, r]$ such that
\begin{eqnarray*}
\frac{|P_1 R(0)u + \lambda P_1 R^2(0)u|}{\|P_0 R(0)u + \lambda P_0 R^2(0)u\| + |P_1 R(0)u + \lambda P_1 R^2(0)u|}
\ge \frac{v}{z + v} - \frac{\delta_2}2 \\
\ge t_0 + \frac{\delta_2}2, \ \ \ |\lambda| \le r_0 ,
\end{eqnarray*}
and hence $\|R(0)u + \lambda R^2(0)u\|' = \|R(0)u + \lambda R^2(0)u\|$.

Since $\psi_1$ satisfies \eqref{cc}, $\Yfont_1$ is complex uniformly convex. Take an arbitrary $\tau > 0$ and consider
$\delta$ corresponding to $\varepsilon := r_0\tau/2$ in the
definition of complex uniform convexity. Decreasing $\delta$ if necessary, we can assume that $\delta \le \max\{1/2, \delta_1\}$.
Let $x := R(0)u$ and $y := r_0 R^2(0)u$. Then the first inequality in \eqref{res23} implies
\begin{eqnarray*}
\|x + \zeta y\|' = \|R(0)u + \zeta r_0 R^2(0)u\|' = \|R(0)u + \zeta r_0 R^2(0)u\| \\
\le \|R(0) + \zeta r_0 R^2(0)\|\, \|u\| \le
M\, \frac1M = 1 , \ \ \ |\zeta| \le 1 .
\end{eqnarray*}
Since $\|x\|' = \|x\| >  1 - \delta$, the complex uniform convexity of $\Yfont_1$ implies that
$\|y\|' < \varepsilon$. Now it follows from \eqref{absnorms} that  $\|R^2(0)u\| \le 2 \|R^2(0)u\|' < \tau$, i.e $\|B^{-2} u\| < \tau$.

Applying \eqref{21} with $w :=  B^{-2} u \in \mbox{\rm Dom}(B^2)$ and taking into account that $\|w\| < \tau$ and
$$
\|Bw\| = \|B^{-1} u\| = \|R(0)u\| >  1 - \delta \ge 1/2\, , \ \ \
\|B^2w\| = \|u\| =\frac{1} M ,
$$
we obtain
$$
\frac14 \le C \left(\frac1M + \tau\right)\tau ,
$$
where $\tau > 0$ can be taken arbitrarily small. This contradiction shows that $\|P_0R(0)\| < M$ cannot hold, i.e. that
$\|P_0R(0)\| = M$.

\emph{Part 4.} Since $\|P_0R(0)\| = M$, for any $\delta \in (0, 1/2]$ there exists $u \in \Yfont$ such that
$\|u\| = 1/M$ and $\|P_0 R(0)u\| > 1 - \delta$. Suppose $\theta\left(R^2(0) u\right) < \rho$ and
$\theta\left(R^3(0) u\right) < \rho$ (see \eqref{theta}), where $\rho > 0$ is a sufficiently small number to be chosen later.
Since $P_1R^2(0) u, P_1R^3(0) u \in \mathbb{C}$, there exist $\mu, \eta \in \mathbb{C}$ such that
$|\mu| + |\eta| = 1$ and $\mu P_1R^2(0) u + \eta P_1R^3(0) u = 0$. Then \eqref{th2} implies
\begin{eqnarray*}
&&\|\mu R^2(0) u + \eta R^3(0) u\| = \|\mu P_0R^2(0) u + \eta P_0R^3(0) u\| \\
&& \le 2\rho (|\mu| \|R^2(0) u\| + |\eta| \|R^3(0) u\|)\\
&&  \le 2\rho (|\mu| M + |\eta| M^2) \le
2M(M + 1) \rho .
\end{eqnarray*}
Applying \eqref{21} with $w :=  \mu R^2(0) u + \eta R^3(0) u \in \mbox{\rm Dom}(B^2)$, we get
\begin{eqnarray*}
\|\mu R(0) u + \eta R^2(0) u\|^2 \le C\left(\|\mu u + \eta R(0) u\| + 2M(M + 1) \rho\right) 2M(M + 1) \rho  \\
\le C\left(\frac1M + 1 + 2M(M + 1) \rho\right) 2M(M + 1) \rho .
\end{eqnarray*}
Let $\mathcal{M}_1(\rho)$ denote the square root of the right-hand side of the last inequality. Then
\begin{eqnarray*}
\|\mu P_0 R(0) u + \eta P_0 R^2(0) u\| \le \|\mu R(0) u + \eta R^2(0) u\| \\
\le \mathcal{M}_1(\rho)
= O\left(\sqrt{\rho}\right) \ \mbox{ as } \rho \to 0 .
\end{eqnarray*}
Hence
\begin{eqnarray*}
\frac{|\mu|}2 \le  |\mu| (1 - \delta) \le \|\mu P_0 R(0) u\| \le \mathcal{M}_1(\rho)  +
\|P_0 R^2(0) u\| \\
\le \mathcal{M}_1(\rho) + 2\rho \|R^2(0) u\| \le \mathcal{M}_1(\rho) + 2\rho M
=: \mathcal{M}_2(\rho) \\
= O\left(\sqrt{\rho}\right) \ \mbox{ as } \rho \to 0
\end{eqnarray*}
(see \eqref{th2}), and $|\eta| = 1 - |\mu| \ge 1 - 2\mathcal{M}_2(\rho)$. Hence
\begin{eqnarray*}
\left(1 - 2\mathcal{M}_2(\rho)\right) \|R^2(0) u\| \le \|\eta R^2(0) u\| \le
\mathcal{M}_1(\rho) + \|\mu R(0) u\| \\
\le \mathcal{M}_1(\rho) + 2 \mathcal{M}_2(\rho)
\end{eqnarray*}
and
$$
\|R^2(0) u\| \le \frac{\mathcal{M}_1(\rho) + 2 \mathcal{M}_2(\rho)}{1 - 2\mathcal{M}_2(\rho)} =:
\mathcal{M}_3(\rho)
= O\left(\sqrt{\rho}\right) \ \mbox{ as } \rho \to 0 .
$$
Applying \eqref{21} with $w :=  R^2(0) u  \in \mbox{\rm Dom}(B^2)$, we get
$$
\frac14 \le C\left(\frac1M + \mathcal{M}_3(\rho)\right) \mathcal{M}_3(\rho) .
$$
There exists $\rho_0$ depending only on $M$ and on $C$ in \eqref{21} such that the above inequality fails for all  $\rho \in (0, \rho_0]$.
This contradiction shows that
at least one of the inequalities $\theta\left(R^2(0) u\right) \ge \rho_0$ and
$\theta\left(R^3(0) u\right) \ge \rho_0$ has to hold.

\emph{Part 5.} Suppose $\theta\left(R^3(0) u\right) \ge \rho_0$, Here, $u$ is such that $\|u\| = 1/M$ and $\|P_0 R(0)u\| > 1 - \delta$
with a sufficiently small $\delta \in (0, 1/2]$. Take an arbitrary $\tau > 0$ and consider
$\delta$ corresponding to $\varepsilon := r\tau$ in the definition of complex uniform convexity applied to the space $Y$.
Let $x := P_0 R(0)u$ and $y := r P_0 R^3(0)u$. Then the second inequality in \eqref{res23} implies
\begin{eqnarray*}
\|x + \zeta y\| = \|P_0 R(0)u + \zeta r P_0 R^3(0)u\| = \|P_0 (R(0)u + \zeta r R^3(0)u)\| \\
\le \|R(0)u + \zeta r R^3(0)u\|
\le \|R(0) + \zeta r R^3(0)\|\, \|u\| \\
\le M\, \frac1M = 1 , \ \ \ |\zeta| \le 1 .
\end{eqnarray*}
Since $\|x\| >  1 - \delta$, the complex uniform convexity of $Y$ implies that
$\|y\| < \varepsilon$. Hence $\|P_0 R^3(0)u\| < \tau$. Since $\theta\left(R^3(0) u\right) \ge \rho_0$, it follows from
\eqref{th1} that $\|R^3(0)u\| < \tau/\rho_0$. Applying Theorem~\ref{Rota} with $k = 2$, $n= 3$, and $w :=  R^3(0) u  \in \mbox{\rm Dom}(B^3)$, we obtain
$$
\frac18 \le L_{3, 2} \left(\frac1M + \frac{\tau}{\rho_0}\right)^2 \frac{\tau}{\rho_0}\, ,
$$
where $\tau > 0$ can be taken arbitrarily small. This contradiction shows that $\theta\left(R^3(0) u\right) \ge \rho_0$ cannot hold.

\emph{Part 6.} Similarly, one shows that $\theta\left(R^2(0) u\right) \ge \rho_0$ cannot hold either. Since this exhausts our list of
possibilities, we conclude that there cannot exist $\lambda_0 \in \Omega$ such that $\|R(\lambda_0)\| = M$. This proves our claim
in the case of $\psi$ satisfying the conditions stated at the beginning of Part 3 above.

\emph{Part 7.}  Finally, suppose $\psi(t) > 1 - t$ for all $t \in (0, 1/2]$ and $\psi(t) = t$ for $t$ sufficiently close to $1$. Then
$\psi^*(t) > t$ for all $t \in [1/2, 1)$ (see \eqref{t}). Hence $\psi^*$ satisfies either the conditions in Part 1 or those in Part 3, and
one can repeat the above arguments using \eqref{Dkn*} instead of \eqref{21} and \eqref{Dkn} (cf. Part 2).

\end{proof}

\appendix
\section{Auxiliary results on geometry of Banach spaces}

In this first appendix we summarize some well known concepts and theorems that are used in the paper.

\begin{definition}\label{uconvsmooth}
A Banach space $Y$ is called \\
(i) complex strictly convex (strictly convex) if
\begin{eqnarray*}
x, y \in Y, \ \|x\| = 1 \ \text{ and } \ \|x + \zeta y\| \le 1, \ \ \forall \zeta \in \mathbb{C} \ \
(\forall \zeta \in \mathbb{R}) \  \text{ with } \ |\zeta| \le 1 \\
\text{implies} \ \  y = 0 ;
\end{eqnarray*}
(ii) complex uniformly convex (uniformly convex) if for every $\varepsilon > 0$ there
exists $\delta > 0$ such that
\begin{eqnarray*}
x, y \in Y, \ \|y\| \ge \varepsilon \ \text{ and } \ \|x + \zeta y\| \le 1, \ \ \forall \zeta \in \mathbb{C} \ \
(\forall \zeta \in \mathbb{R}) \  \text{ with } \ |\zeta| \le 1 \\
\text{implies} \ \ \|x\| \leq 1 - \delta .
\end{eqnarray*}
\end{definition}
It is clear that uniform convexity implies both complex uniform convexity and
strict convexity, while each of these two properties implies complex strict convexity.
Hilbert spaces and the $L_p$ spaces with $1 < p < \infty$ are uniformly convex (\cite{Cla},
see also \cite[Ch. III, $\S$1]{Die} or \cite[Theorem 11.10]{Car}).
$L_1$ is complex uniformly convex (see \cite{G1}) but not strictly convex.
$L_\infty$ does not have any of the above properties, but $(L_\infty)^*$ is
complex uniformly convex. Indeed, this space is isometrically isomorphic to
a space of bounded finitely additive set functions (see
\cite[Ch. IV, $\S$8, Theorem 16 and Ch. III, $\S$1, Lemma 5]{DS}) which is
complex uniformly convex (see \cite{Les}). Hence the class of spaces to which Theorem
\ref{strictlyA} applies includes
Hilbert spaces and $L_p(S, \Sigma, \mu)$ with $1 \le p \le \infty$, where
$(S, \Sigma, \mu)$ is an arbitrary measure space.

If $1\leq p<\infty$, then the $p$-direct sum
$X\oplus_pY$ of Banach spaces $X$ and $Y$ is the algebraic direct sum $X\oplus Y$ endowed with the norm
$$
\|(x,y)\|_p=\left(\|x\|_X^p+\|y\|_Y^p\right)^{1/p}.
$$
Similarly the $\infty$-direct sum $X\oplus_\infty Y$ is $X\oplus Y$ with the norm
$$
\|(x,y)\|_\infty=\max\left\{\|x\|_X,\|y\|_Y\right\}.
$$

These definitions are special cases of the absolute norms that we describe next.
Following \cite{BSW}, \cite[$\S$21]{BD} we say that a norm $\|\cdot\|$ on $\mathbb{C}^2$ is \textit{absolute} if
$$
\|(z, w)\| = \|(|z|, |w|)\| , \ \ \ \forall (z, w) \in \mathbb{C}^2
$$
and \textit{normalized} if
$$
\|(1, 0)\| = \|(0, 1)\| = 1 .
$$
Let $N_a$ denote the set of all absolute normalized norms on $\mathbb{C}^2$.

Let $\Psi$ denote the set of all continuous, convex functions $\psi$ on $[0, 1]$ such that $\psi(0) = \psi(1) = 1$ and
\begin{equation}\label{lowbd}
\max\{1 - t, t\} \le \psi(t) \le 1 , \ \ \ 0 \le t \le 1 . \end{equation}

\begin{theorem}\label{Na-Psi}(see \cite[$\S$21]{BD}).\newline
The formula $\|\cdot\| \to \psi(t)\equiv\|(1-t,t)\|$ defines a one-one map from $N_a$ onto $\Psi$ with inverse
$\psi\to \|\cdot\|_\psi$ given by
\begin{equation}\label{normpsi}
\|(z, v)\|_\psi := \left\{ \begin{array}{cl}
(|z| + |v|)\, \psi\left(\frac{|v|}{|z| + |v|}\right)\,  , & \   (z, v) \not= (0, 0) , \\
0 , &   \   (z, v) = (0, 0) .
\end{array} \right.
\end{equation}
If $\psi(t) \equiv \max\{1 - t, t\}$, then $\|\cdot\|_\psi$ coincides with the $l_\infty$ norm, while if
$\psi(t) \equiv 1$, then $\|\cdot\|_\psi$ coincides with the $l_1$ norm. Moreover
\begin{equation}\label{absnorms}
\frac12\, \|x\|_1 \le \|x\|_\infty \le \|x\|_\psi \le \|x\|_1 \le 2 \|x\|_\infty
\end{equation}
for all $\psi\in\Psi$ and all $x\in\mathbb{C}^2$.
\end{theorem}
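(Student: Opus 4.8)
The plan is to exhibit two mutually inverse maps and check four things. The forward map sends an absolute normalized norm $\|\cdot\|\in N_a$ to the function $\psi(t)=\|(1-t,t)\|$, and the candidate inverse sends $\psi\in\Psi$ to $\|\cdot\|_\psi$ defined by \eqref{normpsi}. I would organize the argument as: (a) the forward map lands in $\Psi$; (b) a reconstruction identity; (c) \eqref{normpsi} genuinely defines a norm; and (d) the inequalities \eqref{absnorms}.

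For (a), fix $\|\cdot\|\in N_a$ and set $\psi(t)=\|(1-t,t)\|$. Normalization gives $\psi(0)=\psi(1)=1$; convexity is immediate since $t\mapsto(1-t,t)$ is affine and a norm is convex; and $\psi(t)\le(1-t)\|(1,0)\|+t\|(0,1)\|=1$ by the triangle inequality. The lower bound in \eqref{lowbd} follows from absoluteness plus convexity by an averaging trick: $\|(1-t,0)\|=\|\tfrac12(1-t,t)+\tfrac12(1-t,-t)\|\le\|(1-t,t)\|$, and the left side equals $1-t$; symmetrically $\|(0,t)\|=t\le\|(1-t,t)\|$. Hence $\psi\in\Psi$. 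For (b), given $(z,v)\neq(0,0)$, absoluteness and homogeneity yield $\|(z,v)\|=(|z|+|v|)\,\|(1-t,t)\|=(|z|+|v|)\psi(t)$ with $t=|v|/(|z|+|v|)$, which is exactly the right-hand side of \eqref{normpsi}. This shows every norm in $N_a$ is recovered from its $\psi$, giving injectivity of the forward map and identifying its inverse with \eqref{normpsi}.

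The substantive step is (c): for $\psi\in\Psi$, formula \eqref{normpsi} defines an absolute normalized norm. Absoluteness, normalization, positive homogeneity, and positive-definiteness (using $\psi\ge\tfrac12$ from \eqref{lowbd}) are read off directly, so the \emph{triangle inequality is the main obstacle}. I would reduce it to properties of the positive-quadrant function $\phi(a,b):=(a+b)\psi\!\left(\tfrac{b}{a+b}\right)$, $\phi(0,0):=0$, so that $\|(z,v)\|_\psi=\phi(|z|,|v|)$. First, $\phi$ is convex, being the perspective $g(x,s)=s\,\psi(x/s)$ of the convex function $\psi$ evaluated at the linear substitution $(x,s)=(b,a+b)$, and the perspective of a convex function is convex. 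Being also positively homogeneous of degree one, $\phi$ is subadditive on the quadrant. Next, $\phi$ is nondecreasing in each variable: for $0\le a_1\le a_2$ one writes $(a_1,b)=\tfrac{a_1}{a_2}(a_2,b)+(1-\tfrac{a_1}{a_2})(0,b)$ and uses convexity together with $\phi(0,b)=b\le\phi(a_2,b)$, the last inequality being exactly $\psi(t)\ge t$ from \eqref{lowbd}; monotonicity in $b$ is symmetric, using $\psi(t)\ge1-t$. Combining monotonicity with $|z_1+z_2|\le|z_1|+|z_2|$ and $|v_1+v_2|\le|v_1|+|v_2|$, and then subadditivity, gives
\[
\|(z_1+z_2,v_1+v_2)\|_\psi=\phi(|z_1+z_2|,|v_1+v_2|)\le\phi(|z_1|+|z_2|,|v_1|+|v_2|)\le\phi(|z_1|,|v_1|)+\phi(|z_2|,|v_2|),
\]
which is the triangle inequality. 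Finally $\|(1-t,t)\|_\psi=\psi(t)$ shows the forward map is onto and that the two maps are mutually inverse.

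For (d), the inequalities \eqref{absnorms} are immediate once \eqref{normpsi} is established: with $a=|z|$, $b=|v|$ and $t=b/(a+b)$, the bounds $\max\{1-t,t\}\le\psi(t)\le1$ give $\|x\|_\infty\le\|x\|_\psi\le\|x\|_1$, while $\tfrac12\|x\|_1\le\|x\|_\infty$ and $\|x\|_1\le2\|x\|_\infty$ are the elementary relations between the $\ell_1$ and $\ell_\infty$ norms on $\mathbb{C}^2$. The only genuine difficulty in the whole argument is the triangle inequality in (c); everything else is bookkeeping with absoluteness and the defining bounds \eqref{lowbd}.
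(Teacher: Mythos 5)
Your proof is correct and complete. Note, however, that the paper contains no proof of this statement at all: it is quoted as a known result with a citation to Bonsall and Duncan \cite[$\S$21]{BD}, so there is no in-paper argument to compare against; your write-up supplies what the paper delegates to the literature. Your route is essentially the classical one (going back to Bauer--Stoer--Witzgall and Bonsall--Duncan): the averaging trick with absoluteness to get $\max\{1-t,t\}\le\psi(t)$, the reconstruction identity $\|(z,v)\|=(|z|+|v|)\psi\bigl(|v|/(|z|+|v|)\bigr)$, and then the verification that \eqref{normpsi} is a norm, where the triangle inequality is the only substantive point. Your handling of that point is clean: viewing $\phi(a,b)=(a+b)\psi\bigl(b/(a+b)\bigr)$ as the perspective of the convex function $\psi$ composed with a linear map gives convexity, degree-one homogeneity then gives subadditivity, and coordinatewise monotonicity (derived from $\psi(t)\ge t$ and $\psi(t)\ge 1-t$ via your convex-combination decomposition) reduces the complex triangle inequality to the nonnegative quadrant. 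The classical texts argue in the reverse order, establishing monotonicity of absolute norms first, but the mathematical content is the same. One small point you should make explicit: the perspective argument gives convexity of $\phi$ only on the punctured quadrant $\{a+b>0\}$; it extends through the origin because $\phi(0,0)=0$ and $\phi$ is positively homogeneous, which is exactly what your subadditivity step $\phi(u+v)=2\phi\bigl(\tfrac{u+v}{2}\bigr)\le\phi(u)+\phi(v)$ uses when one of the arguments vanishes. This is a one-line remark, not a gap.
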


Let $\psi \in \Psi$. Then the $\psi$-direct sum
$X\oplus_\psi Y$ of the Banach spaces $X$ and $Y$ is the space $X\oplus Y$
equipped with the norm
$$
\|(x,y)\|=\left\|(\|x\|_X, \|y\|_Y)\right\|_\psi .
$$
Let
\begin{equation}\label{P01}
P_0 : X\oplus_\psi Y \to X, \ \ P_1 : X\oplus_\psi Y \to Y
\end{equation}
be the canonical projections, and let
\begin{equation}\label{theta}
\theta(u) := \frac{\|P_0 u\|_X}{\|P_0 u\|_X + \|P_1 u\|_Y} = \frac{\|x\|_X}{\|x\|_X + \|y\|_Y}\, , \ \ \ u = (x, y) \in X\oplus_\psi Y .
\end{equation}
Then
\begin{eqnarray}
&& \|u\| = \left(\|P_0 u\|_X + \|P_1 u\|_Y\right) \psi\left(\frac{\|P_1 u\|_Y}{\|P_0 u\|_X + \|P_1 u\|_Y}\right) \nonumber \\
&& = \frac{1}{\theta(u)}\, \|P_0 u\|_X \psi\left(\frac{\|P_1 u\|_Y}{\|P_0 u\|_X + \|P_1 u\|_Y}\right) \le
\frac{1}{\theta(u)}\, \|P_0 u\|_X , \label{th1} \\ \nonumber \\
&& \|P_0 u\|_X = \theta(u) \left(\|P_0 u\|_X + \|P_1 u\|_Y\right) \le 2 \theta(u) \|u\|  \label{th2}
\end{eqnarray}
(see \eqref{absnorms}).

The space $X\oplus_\psi Y$ is complex uniformly (strictly) convex if and only if $X$ and $Y$ are
complex uniformly (strictly) convex and
\begin{equation}\label{cc}
\psi(t) > \max\{1 - t, t\} , \ \ \ \forall t \in (0, 1)
\end{equation}
(see \cite{DT}). It is interesting to compare this result to its real valued counterpart: the space $X\oplus_\psi Y$ is
uniformly (strictly) convex if and only if $X$ and $Y$ are
uniformly (strictly) convex and $\psi$ is strictly convex, i.e.
$$
s, t \in [0, 1] , \ s \not= t, \ 0 < c < 1 \ \ \Longrightarrow \ \
\psi((1 - c)s + c t) < (1 - c) \psi(s) + c \psi(t)
$$
(see \cite{KST, SK, TKS}).

Suppose $\psi(t_0) = 1 - t_0$ for some $t_0 \in (0, 1/2]$. Then the equality $\psi(0) = 1$, \eqref{lowbd} and the definition
of a convex function imply that $\psi(t) = 1 - t$ for all $t \in [0, t_0]$. Similarly, if $\psi(t_1) = t_1$ for some $t_1 \in [1/2, 1)$,
then $\psi(t) = t$ for all $t \in [t_1, 1]$.

The dual of $\|\cdot\|_\psi$ is the absolute normalized norm $\|\cdot\|_{\psi^*}$ with
\begin{equation}
\psi^*(t) := \max_{0 \le s \le 1} \frac{(1 - t)(1 - s) + t s}{\psi(s)}\label{psidualeq}
\end{equation}
(see \cite{BSW}, \cite{LF} or \cite[Theorem 5.4.19]{HJ}). If $\psi(s_0) = 1 - s_0$ for some $s_0 \in (0, 1/2]$, then
$$
\psi^*(t) \ge \frac{(1 - t)(1 - s_0) + t s_0}{1 - s_0} = (1 - t) + \frac{s_0}{1 - s_0}\, t > 1 - t  , \ \ \ \forall t \in (0, 1] .
$$
Similarly,  if $\psi(s_1) = s_1$ for some $s_1 \in [1/2, 1)$, then
\begin{equation}\label{t}
\psi^*(t) > t , \ \ \ \forall t \in [0, 1) .
\end{equation}

\section{Kolmogorov-Kallman-Rota type inequalities}

The main result in this Appendix, Theorem~\ref{Rota}, is used in Part 5 of the proof of Theorem~\ref{co1a}.
It is an extension to generators of arbitrary $C_0$ semigroups of a well known estimate for generators of contraction
semigroups (see \cite{CK}). Considering $B - \mu I$ with a sufficiently large $\mu \ge 0$ instead of the original operator $B$,
one can reduce the more general case to an estimate for the generator of a bounded
semigroup (see \eqref{KLanKol}). In order to derive \eqref{Dkn} from the latter, one needs estimates for intermediate powers of $B$,
and these are provided by Lemma \ref{eps}. Theorem~\ref{Rota*}, which extends Theorem~\ref{Rota} to the adjoints of
semigroup generators, is used in Part 7 of the proof of Theorem~\ref{co1a}, while the result in Remark~\ref{semirem} is used
in Part 2 of the proof.

We start with the following Landau-Kolmogorov type inequality for $n$ times continuously differentiable functions
$f : [0, \infty) \to \mathbb{C}$:
\begin{equation}\label{LanKol}
\left\|f^{(k)}\right\|^n_\infty \le M_{n, k} \|f\|^{n - k}_\infty \left\|f^{(n)}\right\|^k_\infty ,
\end{equation}
where $n \ge 2$,  $k = 1, \dots, n -1$, the constant $M_{n, k} < +\infty$ does not depend on $f$,
and $\|\cdot\|_\infty$ denotes the sup norm (see \cite{Kol, SC}).

Let $B_0$ be the infinitesimal generator of a $C_0$ semigroup $(T(t))_{t \ge 0}$ on a Banach space $X$ such that
$\|T(t)\| \le K$, $\forall t \ge 0$. Suppose $w \in \mbox{\rm Dom}(B_0^n)$. Then for any $g \in X^*$ with $\|g\|_{X^*} = 1$, the function
$f(t) := g(T(t)w)$ is $n$ times continuously differentiable  and
$$
f^{(m)}(t) = g(T(t) B^m_0w) , \ \ \ t \ge 0, \ \ m = 1, \dots, n
$$
(see, e.g., \cite[Lemmata 6.1.11 and 6.1.13]{Da}). Take $g$ such that $g(B^k_0w) = \|B^k_0w\|$.
Then \eqref{LanKol} implies
\begin{eqnarray}\label{KLanKol}
\|B^k_0w\|^n &=&  \left(g(B^k_0w)\right)^n \le \left(\sup_{t \ge 0} \left|g(T(t) B^k_0w)\right|\right)^n = \left\|f^{(k)}\right\|^n_\infty \nonumber \\
&\le& M_{n, k} \|f\|^{n - k}_\infty \left\|f^{(n)}\right\|^k_\infty  \nonumber \\
&=& M_{n, k} \left(\sup_{t \ge 0} \left|g(T(t) w)\right|\right)^{n - k}
\left(\sup_{t \ge 0} \left|g(T(t) B^n_0w)\right|\right)^k \\
&\le& M_{n, k} \left(\sup_{t \ge 0} \left\|T(t) w\right\|\right)^{n - k}
\left(\sup_{t \ge 0} \left\|T(t) B^n_0w\right\|\right)^k \nonumber \\
&\le& M_{n, k} K^n \|w\|^{n - k}\|B^n_0w\|^k , \ \ \ n \ge 2 , \ k = 1, \dots, n -1  \nonumber
\end{eqnarray}
(cf. \cite{CK}).
The optimal values of the constants $M_{n, k}$ are discussed in \cite{SC} and \cite{CK}. In particular, if
$n  = 2$, $k = 1$, then $M_{n, k} = M_{2, 1} = 4$, \eqref{LanKol} is the Landau inequality (\cite{Lan})
$$
\|f'\|^2_\infty \le 4 \|f\|_\infty\|f''\|_\infty ,
$$
and \eqref{KLanKol} is the Kallman-Rota inequality
$$
\|B_0w\|^2 \le 4K^2 \|w\| \|B^2_0w\|
$$
(see \cite{KR} or \cite[Chapter 1, Lemma 2.8]{P}).

Using the Kallman-Rota inequality one can easily show that if $B$ is the infinitesimal generator of a (possibly unbounded) $C_0$
semigroup on a Banach space $X$, then
there exists a constant $C > 0$ such that
\begin{equation}\label{21}
\|Bw\|^2 \le C (\|B^2w\| + \|w\|)\|w\| ,  \ \ \ \forall w \in \mbox{\rm Dom}(B^2)
\end{equation}
(see \cite{S3}). The proof is an almost trivial special case of the arguments given below.

It follows from \eqref{21} that for any $\varepsilon > 0$ there exists $C(\varepsilon) > 0$ such that
\begin{equation}\label{12}
\|Bw\| \le \varepsilon \|B^2w\| + C(\varepsilon)\|w\| ,  \ \ \ \forall w \in \mbox{\rm Dom}(B^2) .
\end{equation}

\begin{lemma}\label{eps}
For any $n \in \mathbb{N}$, $n \ge 2$, any $k = 1, \dots, n -1$, and any $\varepsilon > 0$ there exists $C_{n, k}(\varepsilon) > 0$ such that
\begin{equation}\label{kn}
\|B^kw\| \le \varepsilon \|B^nw\| + C_{n, k}(\varepsilon)\|w\| ,  \ \ \ \forall w \in \mbox{\rm Dom}(B^n) .
\end{equation}
\end{lemma}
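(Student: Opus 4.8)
The statement to prove is Lemma~\ref{eps}, an interpolation inequality for intermediate powers $\|B^k w\|$ in terms of $\|B^n w\|$ and $\|w\|$, generalizing the $n=2$, $k=1$ case \eqref{12}.

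The plan is to proceed by induction on $n$. The base case $n = 2$ (forcing $k = 1$) is precisely \eqref{12}, which is already established. For the inductive step, assume \eqref{kn} holds for $n - 1$ in place of $n$ (for all admissible $k$ and all $\varepsilon > 0$), and prove it for $n$. The key idea is to bootstrap from the available second-order inequality \eqref{12} by applying it to shifted operators $B^{m}$, thereby trading one order of differentiation at a time.

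Here is how I would organize the inductive step. First I would handle the top intermediate power $k = n - 1$ directly: applying \eqref{12} to $w' := B^{n-2} w \in \mbox{\rm Dom}(B^2)$ gives
\[
\|B^{n-1} w\| \le \varepsilon_1 \|B^n w\| + C(\varepsilon_1) \|B^{n-2} w\| ,
\]
which controls $\|B^{n-1} w\|$ in terms of $\|B^n w\|$ and the lower power $\|B^{n-2} w\|$. Then I would invoke the induction hypothesis (applied to the generator $B$ with order $n-1$, noting $B^{n-2} w \in \mbox{\rm Dom}(B)$ suffices once we re-express things, or more cleanly apply the $n-1$ case to bound $\|B^{n-2} w\|$ by $\varepsilon' \|B^{n-1} w\| + C' \|w\|$) to replace $\|B^{n-2} w\|$ by a small multiple of $\|B^{n-1} w\|$ plus a multiple of $\|w\|$. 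Absorbing the resulting small $\|B^{n-1} w\|$ term into the left-hand side (choosing $\varepsilon'$ small enough that the coefficient $C(\varepsilon_1)\varepsilon' < 1$) yields \eqref{kn} for $k = n-1$. For the remaining values $k = 1, \dots, n-2$, I would first apply the just-proved $k = n-1$ estimate, and then feed $\|B^{n-1} w\|$ into the induction hypothesis at order $n - 1$ to descend to $\|B^k w\|$; alternatively one can iterate the two-term inequality \eqref{12} downward from $k$ to $k-1$ and chain the resulting bounds, at each stage making the small parameter proportionally smaller so that the final coefficient of $\|B^n w\|$ is at most the prescribed $\varepsilon$.

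The main obstacle, and the step requiring the most care, is the bookkeeping of the small parameters: each application of \eqref{12} or of the induction hypothesis introduces a large constant multiplying a lower-order term, and to arrive at an arbitrarily small coefficient $\varepsilon$ in front of $\|B^n w\|$ one must choose the intermediate $\varepsilon_i$ in the correct dependent order (innermost first), so that after all absorptions the accumulated product of small factors beats the accumulated product of large constants. This is entirely routine Landau--Kolmogorov-style interpolation, so no genuinely new difficulty arises; the content is purely the careful iteration of the two known inequalities together with the absorption of intermediate terms into the left-hand side.
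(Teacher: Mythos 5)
Your proof is correct and takes essentially the same route as the paper's: induction on $n$, bootstrapping solely from the two-term inequality \eqref{12}, with absorption of the intermediate power into the left-hand side and careful ordering of the small parameters. The only difference is organizational --- the paper's inductive step substitutes $w = Bu$ into the induction hypothesis and works upward from $\|B^2u\|$ and $\|Bu\|$, whereas you first settle the top case $k = n-1$ by applying \eqref{12} to $B^{n-2}w$ and then descend to lower $k$ by chaining --- a mirror-image bookkeeping of the same argument.
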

\begin{proof}
The proof is by induction in $n$. The statement holds for $n = 2$ (see \eqref{12}). Suppose it holds for $n$.
Substituting $w = Bu$ into \eqref{kn} with $k = 1$ and using \eqref{12}, we get
\begin{eqnarray*}
&& \|B^2 u\| \le \frac\varepsilon2\, \|B^{n + 1}u\| + C_{n, 1}\left(\frac\varepsilon2\right)\|Bu\| \\
&& \le \frac\varepsilon2\, \|B^{n + 1}u\| + C_{n, 1}\left(\frac\varepsilon2\right)\left(\rho \|B^2u\| + C(\rho)\|u\|\right)  , \ \ \
\forall u \in \mbox{\rm Dom}(B^{n + 1}) .
\end{eqnarray*}
Taking $\rho = \frac12\, C_{n, 1}\left(\frac\varepsilon2\right)^{-1}$, we get
\begin{equation}\label{2n}
\|B^2 u\| \le \varepsilon \|B^{n + 1}u\| + C_{n + 1, 2}(\varepsilon) \|u\| , \ \ \
\forall u \in \mbox{\rm Dom}(B^{n + 1})
\end{equation}
with $C_{n + 1, 2}(\varepsilon) = 2 C_{n, 1}\left(\frac\varepsilon2\right) C\left(\frac12\, C_{n, 1}\left(\frac\varepsilon2\right)^{-1}\right)$.
Using \eqref{12} again, and then applying \eqref{2n} with $\varepsilon = 1$, we obtain
\begin{eqnarray}\label{1n}
\|Bu\| \le \varepsilon \|B^2u\| + C(\varepsilon)\|u\| \le \varepsilon \|B^{n + 1}u\| + C_{n + 1, 1}(\varepsilon) \|u\| , \\
\forall u \in \mbox{\rm Dom}(B^{n + 1})  , \nonumber
\end{eqnarray}
where $C_{n + 1, 1}(\varepsilon) = \varepsilon C_{n + 1, 2}(1) + C(\varepsilon)$.

Substituting $w = Bu$ into \eqref{kn} again and using \eqref{1n}, we get
\begin{eqnarray*}
\|B^{k + 1} u\| &\le& \frac\varepsilon2\, \|B^{n + 1}u\| + C_{n, k}\left(\frac\varepsilon2\right)\|Bu\| \\
&\le& \frac\varepsilon2\, \|B^{n + 1}u\| + C_{n, k}\left(\frac\varepsilon2\right) \left(\rho \|B^{n + 1}u\| + C_{n + 1, 1}(\rho) \|u\|\right) .
\end{eqnarray*}
Taking $\rho = \frac\varepsilon2\, C_{n, k}\left(\frac\varepsilon2\right)^{-1}$ and denoting $k + 1 = m$, we obtain
$$
\|B^m u\| \le \varepsilon \|B^{n + 1}u\| + C_{n + 1, m}(\varepsilon) \|u\| , \ \ \
\forall u \in \mbox{\rm Dom}(B^{n + 1}) , \ \ m = 3, \dots, n
$$
with $C_{n + 1, m}(\varepsilon) = C_{n, m - 1}\left(\frac\varepsilon2\right)
C_{n + 1, 1}\left(\frac\varepsilon2\, C_{n, m - 1}\left(\frac\varepsilon2\right)^{-1}\right)$. Together with \eqref{2n}, \eqref{1n} this
completes the proof.
\end{proof}

\begin{theorem}\label{Rota}
Let $B$ be the infinitesimal generator of a $C_0$ semigroup on a Banach space $X$.
Then for any $n \in \mathbb{N}$, $n \ge 2$, and any $k = 1, \dots, n -1$, there exists $L_{n, k} > 0$ such that
\begin{equation}\label{Dkn}
\|B^kw\|^n \le  L_{n, k}\left(\|B^nw\| + \|w\|\right)^k \|w\|^{n - k} ,  \ \ \ \forall w \in \mbox{\rm Dom}(B^n) .
\end{equation}
\end{theorem}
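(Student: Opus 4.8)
Theorem~\ref{Rota} asserts a Kolmogorov–Kallman–Rota type inequality
$\|B^kw\|^n \le L_{n,k}(\|B^nw\|+\|w\|)^k\|w\|^{n-k}$ for the generator of an
arbitrary $C_0$ semigroup. The plan is to reduce this multiplicative inequality
to the additive ``interpolation'' inequalities \eqref{kn} already established in
Lemma~\ref{eps}, combined with the base case \eqref{21}. The essential point is
that \eqref{kn} says $\|B^kw\| \le \varepsilon\|B^nw\| + C_{n,k}(\varepsilon)\|w\|$
for every $\varepsilon>0$; the multiplicative form \eqref{Dkn} is obtained from
this family by optimizing over $\varepsilon$, a standard device. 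Throughout I
write $a := \|B^nw\|$, $b := \|w\|$, and $c := \|B^kw\|$, and I fix $w \in \Dom(B^n)$.

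First I would dispose of the boundary case $k$ near the top: the key already-proved
ingredient is the inequality \eqref{21}, which reads
$\|Bu\|^2 \le C(\|B^2u\|+\|u\|)\|u\|$ for all $u \in \Dom(B^2)$. This is precisely
\eqref{Dkn} in the case $n=2$, $k=1$ (with $L_{2,1}=C$), so the theorem holds there.
For the general case I would proceed by induction on $n$, exactly paralleling the
structure of Lemma~\ref{eps}. The cleanest route, however, is not to re-run the
induction but to extract \eqref{Dkn} directly from \eqref{kn} by the optimization
argument: from \eqref{kn} we have $c \le \varepsilon a + C_{n,k}(\varepsilon) b$
for every $\varepsilon>0$. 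To convert an additive bound valid for all $\varepsilon$
into the multiplicative form, one needs quantitative control of how
$C_{n,k}(\varepsilon)$ blows up as $\varepsilon \to 0$; the homogeneity of the
desired estimate suggests $C_{n,k}(\varepsilon)$ should scale like
$\varepsilon^{-k/(n-k)}$.

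The most transparent way to pin down that scaling, and indeed the step I expect to
be the main obstacle, is to make the homogeneity explicit by rescaling the operator.
The inequality \eqref{Dkn} is invariant under $B \mapsto sB$ (both sides scale as
$s^{nk}$ once one notes $(\|B^nw\|+\|w\|)$ is not homogeneous, so one must instead
rescale the argument $w$ or exploit scaling in a more careful way). Concretely, I
would argue as follows. Apply \eqref{kn} and choose $\varepsilon$ to balance the two
terms: the bound $c \le \varepsilon a + C_{n,k}(\varepsilon)b$ is minimized in spirit
when $\varepsilon a \sim C_{n,k}(\varepsilon) b$. To make this rigorous one needs the
explicit dependence of $C_{n,k}(\varepsilon)$ on $\varepsilon$, which the recursive
formulas in the proof of Lemma~\ref{eps} in principle provide but in a form awkward
to optimize. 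I would therefore instead run an independent induction on $n$ directly
for the multiplicative inequality \eqref{Dkn}, using \eqref{21} (i.e. the case
$n=2$, $k=1$) as the sole analytic input, together with the elementary AM–GM /
Young inequality to recombine powers.

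The inductive step I envisage: assume \eqref{Dkn} for exponent $n$ and all
$k=1,\dots,n-1$; to prove it for $n+1$, substitute $u = Bw$ into the inductive
hypothesis to relate $\|B^{k+1}w\|$ and $\|B^{n+1}w\|$ to $\|Bw\|$, then feed in the
base estimate \eqref{21} to replace $\|Bw\|$ by a geometric mean of $\|B^2w\|$ and
$\|w\|$, and finally absorb the cross terms using Young's inequality
$xy \le \tfrac{p^{-1}}{}x^p + q^{-1}y^q$ with exponents chosen to match the
homogeneity $n$ versus $k$. The single genuinely delicate point is bookkeeping the
exponents so that the powers of $\|B^{n+1}w\|$, $\|w\|$ and the lower-order terms
combine into the clean form $(\|B^{n+1}w\|+\|w\|)^k\|w\|^{n+1-k}$; the presence of
the $\|w\|$ summand inside the parenthesis (rather than $\|B^nw\|$ alone) is exactly
what makes the recombination close up without leaving an uncontrolled intermediate
power, since it lets me bound any lower power $\|B^jw\|$ with $j<n$ crudely by
\eqref{kn} with $\varepsilon=1$. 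I would present the exponent arithmetic compactly
and leave the routine Young-inequality manipulations to the reader.
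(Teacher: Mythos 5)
Your proof is correct, but it is genuinely different from the one in the paper. The paper does not obtain \eqref{Dkn} from \eqref{21} and Lemma~\ref{eps} alone: it invokes the scalar Landau--Kolmogorov inequality \eqref{LanKol} for \emph{every} order $n$ (Kolmogorov, Schoenberg--Cavaretta), transfers it to the generator $B_0=B-\mu I$ of a \emph{bounded} semigroup to get the multiplicative estimate \eqref{KLanKol}, and then removes the shift by two binomial expansions, using \eqref{kn} with $\varepsilon=1$ only at the last step to replace the intermediate powers $\|B^{n-l}w\|$ by $\|B^nw\|+C_{n,n-l}(1)\|w\|$. Your induction on $n$, by contrast, takes the second-order estimate \eqref{21} as the sole analytic input (together with the crude $\varepsilon=1$ bound from \eqref{kn}), so it needs only Landau's inequality rather than the full Kolmogorov theorem; that is a real economy. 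The induction does close, and since you deferred the bookkeeping, here it is: write $a_j=\|B^jw\|$ and $A=a_{n+1}+a_0$. Substituting $Bw$ into the level-$n$ hypothesis and using $a_1\le a_{n+1}+C_{n+1,1}(1)\,a_0\lesssim A$ gives $a_m^n\lesssim A^{m-1}a_1^{\,n-m+1}$ for $m=2,\dots,n$. The case $m=1$ is the delicate point you flagged, and it is a bootstrap rather than a routine Young step: $a_1$ occurs on both sides of $a_1^2\le C(a_2+a_0)a_0\lesssim\bigl(A^{1/n}a_1^{(n-1)/n}+a_0\bigr)a_0$, and one must split according to which summand dominates; the first case gives $a_1\lesssim a_0\le A^{1/(n+1)}a_0^{n/(n+1)}$, the second gives $a_1^{(n+1)/n}\lesssim A^{1/n}a_0$, and both yield $a_1^{n+1}\lesssim A\,a_0^n$, i.e.\ the claim for $m=1$. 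Feeding $a_1\lesssim A^{1/(n+1)}a_0^{n/(n+1)}$ back into the display above gives $a_m^n\lesssim A^{m-1+(n-m+1)/(n+1)}a_0^{n(n-m+1)/(n+1)}=A^{mn/(n+1)}a_0^{n(n+1-m)/(n+1)}$, which is exactly the level-$(n+1)$ inequality $a_m^{n+1}\lesssim A^m a_0^{n+1-m}$ raised to the power $n/(n+1)$: the exponents match identically and nothing is left over. In exchange for its heavier analytic input, the paper's route is direct (no induction on the multiplicative inequality) and produces constants expressed explicitly through $M_{n,k}$, $K$ and $\mu$; your route is more self-contained, at the price of the bootstrap and of constants defined only recursively.
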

\begin{proof}
There exist constants  $\mu \ge 0$ and $K \ge 1$ such that the $C_0$ semigroup $T(t)$
generated by $B - \mu I$ satisfies the inequality $\|T(t)\| \le K$, $\forall t \ge 0$ (see, e.g.,
\cite[Chapter 1, Theorem 2.2]{P}). Increasing $\mu$ if necessary, we can assume that
$\left\|(B - \mu I)^{-1}\right\| \le 1$ (see, e.g., \cite[Theorem 12.3.1]{HP}).

Applying \eqref{kn} with $\varepsilon = 1$ in the penultimate inequality below, we get the following from \eqref{KLanKol}
with $B_0 = B - \mu I$
\begin{eqnarray*}
&& \|B^kw\|^n = \left\|\sum_{j = 0}^k \left(\begin{array}{c} k  \\  j \end{array}\right)(B - \mu I)^{k - j} \mu^j w\right\|^n  \\
&& \le \left(\sum_{j = 0}^k \left(\begin{array}{c} k  \\  j \end{array}\right) \mu^j \left\|(B - \mu I)^{k - j} w\right\|\right)^n    \\
&& \le (k + 1)^{n - 1} \sum_{j = 0}^k \left(\begin{array}{c} k  \\  j \end{array}\right)^n \mu^{jn} \left\|(B - \mu I)^{k - j} w\right\|^n    \\
&& \le (k + 1)^{n - 1} K^n \sum_{j = 0}^k \left(\begin{array}{c} k  \\  j \end{array}\right)^n \mu^{jn} M_{n, k - j}  \|(B - \mu I)^nw\|^{k - j} \|w\|^{n - k + j} \\
&& \le (k + 1)^{n - 1} K^n \left(\sum_{j = 0}^k \left(\begin{array}{c} k  \\  j \end{array}\right)^n \mu^{jn} M_{n, k - j}\right)
\|(B - \mu I)^nw\|^{k} \|w\|^{n - k} \\
&& =: D_{n, k} \|(B - \mu I)^nw\|^{k} \|w\|^{n - k} =
D_{n, k} \left\|\sum_{l = 0}^n \left(\begin{array}{c} n  \\  l \end{array}\right)(-\mu)^l B^{n - l} w\right\|^{k} \|w\|^{n - k} \\
&& \le D_{n, k} (n + 1)^{k - 1} \sum_{l = 0}^n \left(\begin{array}{c} n  \\  l \end{array}\right)^k \mu^{lk}\|B^{n - l} w\|^{k} \|w\|^{n - k}\\
&& \le D_{n, k} (n + 1)^{k - 1} \sum_{l = 0}^n \left(\begin{array}{c} n  \\  l \end{array}\right)^k \mu^{lk}
\left(\|B^nw\| + C_{n, n - l}(1)\|w\|\right)^k \|w\|^{n - k} \\
&& \le L_{n, k}\left(\|B^nw\| + \|w\|\right)^k \|w\|^{n - k} ,  \ \ \ \forall w \in \mbox{\rm Dom}(B^n)
\end{eqnarray*}
with a suitable constant $L_{n, k}$.
\end{proof}

If $X$ is reflexive, $B^*$ is the infinitesimal generator of a $C_0$ semigroup on $X^*$
(see, e.g., \cite[Corollary 3.3.9]{{ABHN}}), and \eqref{Dkn} holds with $B^*$ in place of $B$. The latter is true
even if $X$ is not reflexive, although $B^*$ might not be densely defined and the adjoint semigroup
might not be strongly continuous in this case.

\begin{theorem}\label{Rota*}
Let $B$ be the infinitesimal generator of a $C_0$ semigroup on a Banach space $X$.
Then for any $n \in \mathbb{N}$, $n \ge 2$, and any $k = 1, \dots, n -1$, there exists $L_{n, k} > 0$ such that
\begin{equation}\label{Dkn*}
\|(B^*)^kg\|^n \le  L_{n, k}\left(\|(B^*)^ng\| + \|g\|\right)^k \|g\|^{n - k} ,  \ \ \ \forall g \in \mbox{\rm Dom}((B^*)^n) .
\end{equation}
\end{theorem}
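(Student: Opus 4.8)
The plan is to split the argument according to whether $X$ is reflexive. If $X$ is reflexive then, as noted in the remark preceding the theorem, $B^*$ is itself the infinitesimal generator of a $C_0$ semigroup on $X^*$, so \eqref{Dkn*} is nothing but \eqref{Dkn} applied to $B^*$ and there is nothing further to prove. The real content is the general case, in which $B^*$ need not be densely defined and the adjoint semigroup need not be strongly continuous. As in the proof of Theorem~\ref{Rota}, I would first reduce to a bounded semigroup: choose $\mu \ge 0$ and $K \ge 1$ so that the semigroup $T(t)$ generated by $B_0 := B - \mu I$ satisfies $\|T(t)\| \le K$ for all $t \ge 0$ and $\|B_0^{-1}\| \le 1$. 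By Theorem~\ref{dualresolvthm} we then also have $\|T(t)^*\| = \|T(t)\| \le K$ and $\|(B_0^*)^{-1}\| = \|B_0^{-1}\| \le 1$, while $\Dom((B^*)^j) = \Dom((B_0^*)^j)$ for every $j$ because $\mu I$ is bounded.

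The heart of the proof is to establish the analogue for $B_0^*$ of the concluding bound in \eqref{KLanKol}, namely
\[
\|(B_0^*)^k g\|^n \le M_{n,k} K^n \|g\|^{n-k} \|(B_0^*)^n g\|^k , \qquad g \in \Dom((B_0^*)^n).
\]
Fix such a $g$ and an arbitrary $w \in X$, and set $f(t) := g(T(t)w) = (T(t)^* g)(w)$. Although $t \mapsto T(t)^* g$ is in general only weak* continuous, I would use the Phillips identity: for $w \in \Dom(B_0)$ one has $T(t)w - w = \int_0^t T(s) B_0 w\,ds$, and applying $g \in \Dom(B_0^*)$ gives $g(T(t)w) - g(w) = \int_0^t (B_0^* g)(T(s)w)\,ds$. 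Both sides are bounded linear functionals of $w$, so this identity extends from the dense subspace $\Dom(B_0)$ to all of $X$. Since $s \mapsto T(s)w$ is norm continuous and $B_0^* g$ is bounded, the integrand is continuous, whence $f \in C^1$ with $f'(t) = (B_0^* g)(T(t)w)$. Iterating this with $(B_0^*)^m g$ in place of $g$ (legitimate for $m \le n-1$, since then $(B_0^*)^m g \in \Dom(B_0^*)$), one obtains that $f$ is $n$ times continuously differentiable with $f^{(m)}(t) = ((B_0^*)^m g)(T(t)w)$ for $0 \le m \le n$.

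With this smoothness in hand, the displayed inequality follows exactly as \eqref{KLanKol} did. Taking $\|w\| = 1$, the Landau-Kolmogorov inequality \eqref{LanKol} applied to $f$ gives $\|f^{(k)}\|_\infty^n \le M_{n,k}\|f\|_\infty^{n-k}\|f^{(n)}\|_\infty^k$; here $|f^{(k)}(0)| = |((B_0^*)^k g)(w)|$, while $\|f\|_\infty \le K\|g\|$ and $\|f^{(n)}\|_\infty \le K\|(B_0^*)^n g\|$. Taking the supremum over $\|w\| = 1$ replaces $|f^{(k)}(0)|$ by $\|(B_0^*)^k g\|$ and yields the claim. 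I expect the identification of $f^{(m)}$ in the previous paragraph to be the main obstacle, since it is precisely the place where the failure of strong continuity of the adjoint semigroup must be circumvented; everything downstream is bookkeeping.

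Finally, I would note that the passage from \eqref{KLanKol} to \eqref{Dkn} in Theorem~\ref{Rota}, together with the auxiliary estimates \eqref{21} and Lemma~\ref{eps} on which it rests, uses nothing about $B$ beyond the Kolmogorov inequality \eqref{KLanKol} for $B_0$, the binomial expansions of $B^k = (B_0 + \mu I)^k$ and $B_0^n = (B - \mu I)^n$, and the bound $\|B_0^{-1}\| \le 1$. All of these are now available with $B^*$, $B_0^*$ in place of $B$, $B_0$: the displayed inequality supplies the $B_0^*$ form of \eqref{KLanKol}, the domains of powers coincide so the binomial manipulations are valid, and $\|(B_0^*)^{-1}\| \le 1$. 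Repeating the proofs of \eqref{21}, Lemma~\ref{eps} and Theorem~\ref{Rota} verbatim with these substitutions therefore gives \eqref{Dkn*}, completing the argument.
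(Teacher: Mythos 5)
Your proof is correct and follows essentially the same route as the paper: reduce to a bounded semigroup $T(t)$ generated by $B_0=B-\mu I$, establish the adjoint analogue \eqref{LanKol*} of \eqref{KLanKol} by applying the scalar Landau--Kolmogorov inequality \eqref{LanKol} to the function $t \mapsto (T^*(t)g)(w)$, and then repeat the proof of Theorem~\ref{Rota} with that inequality in place of \eqref{KLanKol}. The only difference is that where the paper imports the required $n$-fold continuous differentiability of this function from \cite[Proposition 1.4.4 and Corollary 1.4.5]{BB}, you prove it directly from the identity $g(T(t)w)-g(w)=\int_0^t (B_0^*g)(T(s)w)\,ds$ (established on $\Dom(B_0)$ and extended to all of $X$ by density and boundedness, then iterated in $m$), which is a correct, self-contained substitute for that citation.
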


\begin{proof}
We start by proving an analogue of \eqref{KLanKol} for the adjoint $B^*_0$ of the infinitesimal generator $B_0$ of a $C_0$ semigroup $(T(t))_{t \ge 0}$ on $X$  such that
$\|T(t)\| \le K$, $\forall t \ge 0$.
The adjoint semigroup $(T^*(t))_{t \ge 0}$
is  weak* continuous, $B^*$ is its weak* infinitesimal generator, $\|T^*(t)\| \le K$, $\forall t \ge 0$,
the function
$f_x(t) := \left(T^*(t)g\right)x$ is $n$ times continuously differentiable for any $g \in \mbox{\rm Dom}((B_0^*)^n)$ and
any $x \in X$ with $\|x\| = 1$, and
$$
f_x^{(m)}(t) = \left(T^*(t) (B_0^*)^m g\right)x , \ \ \ t \ge 0, \ \ m = 1, \dots, n
$$
(see \cite[Proposition 1.4.4 and Corollary 1.4.5]{BB}). Then, as for \eqref{KLanKol}, one derives from \eqref{LanKol}
\begin{eqnarray}\label{LanKol*}
&& \|(B^*_0)^k g\|^n =  \left(\sup_{\|x\| = 1}\left|\left((B_0^*)^k g\right)x\right|\right)^n \le
\left(\sup_{\|x\| = 1}\sup_{t \ge 0} \left|\left(T^*(t) (B_0^*)^k g\right)x\right|\right)^n  \nonumber \\
 && = \sup_{\|x\| = 1}\left\|f_x^{(k)}\right\|^n_\infty
\le M_{n, k} \sup_{\|x\| = 1}\|f_x\|^{n - k}_\infty \left\|f_x^{(n)}\right\|^k_\infty  \nonumber \\
&& = M_{n, k} \sup_{\|x\| = 1}\left(\sup_{t \ge 0} \left|\left(T^*(t) g\right)x\right|\right)^{n - k}
\left(\sup_{t \ge 0} \left|\left(T^*(t) (B_0^*)^n g\right)x\right|\right)^k \\
&& \le M_{n, k} \left(\sup_{t \ge 0} \left\|T^*(t) g\right\|\right)^{n - k}
\left(\sup_{t \ge 0} \left\|T^*(t) (B_0^*)^n g\right\|\right)^k \nonumber \\
&& \le M_{n, k} K^n \|g\|^{n - k}\| (B_0^*)^n g\|^k , \ \ \ n \ge 2 , \ k = 1, \dots, n -1 .  \nonumber
\end{eqnarray}

The proof is completed by using \eqref{LanKol*} instead of \eqref{KLanKol} and repeating the proof of Theorem \ref{Rota}.

\end{proof}

\begin{remark}\label{semirem}
It was shown in \cite{S3} that the resolvent norm of the
infinitesimal generator of a $C_0$ semigroup on a Banach space
cannot be constant on an open set if the underlying space is complex
uniformly convex. The proof relied on estimate \eqref{21}. Theorem
\ref{Rota*} allows one to extend the main result of \cite{S3} to the case where
the dual of the underlying Banach space, rather than the space itself,
is complex uniformly convex. The theorem by S. B\"ogli and P. Siegl (\cite{BS})
mentioned in the Introduction provides an easier way of proving this `dual' result.
\end{remark}

{\it Acknowledgements.} \
We are grateful to the anonymous referee for helpful comments.

\end{document}